\newcommand{\Q}{\mathbb Q}
\newcommand{\R}{\mathbb R}
\newcommand{\Z}{\mathbb Z}
\renewcommand{\P}{\mathbb P}
\DeclareMathOperator{\Aut}{Aut}
\DeclareMathOperator{\Vor}{Vor}
\DeclareMathOperator{\rank}{rank}
\newtheorem{theorem}{Theorem}[section]
\newtheorem{proposition}[theorem]{Proposition}
\newtheorem{construction}[theorem]{Construction}
\newtheorem{lemma}[theorem]{Lemma}
\theoremstyle{definition}
\newtheorem{definition}[theorem]{Definition}
\newtheorem{example}[theorem]{Example}
\newtheorem{algorithm}[theorem]{Algorithm}
\theoremstyle{remark}
\newtheorem{remark}[theorem]{Remark}
\newcommand{\Atrg}{\ensuremath{A^{\text{tr}}_g}}
\newcommand{\Atr}[1]{\ensuremath{A^{\text{tr}}_#1}}
\newcommand{\Mtrg}{\ensuremath{M^{\text{tr}}_g}}
\newcommand{\ttrg}{\ensuremath{t^{\text{tr}}_g}}
\newcommand{\Azong}{\ensuremath{A^{\text{zon}}_g}}
\newcommand{\Acogrg}{\ensuremath{A^{\text{cogr}}_g}}
\newcommand{\Acogr}[1]{\ensuremath{A^{\text{cogr}}_#1}}
\newcommand{\hati}{{\ensuremath{{\hat{\imath}}}}}
\newcommand{\hatj}{{\ensuremath{{\hat{\jmath}}}}}
\newcommand{\mm}[4]{\ensuremath{\left(\begin{matrix}#1 & #2\\ #3 & #4\end{matrix}\right)}}
\DeclareMathOperator{\Del}{Del}
\DeclareMathOperator{\Stab}{Stab}
\DeclareMathOperator{\sspan}{span}
\DeclareMathOperator{\cone}{cone}
\DeclareMathOperator{\Out}{Out}
\newcommand{\hooklongrightarrow}{\lhook\joinrel\longrightarrow}
\newcommand{\twoheadlongrightarrow}{\relbar\joinrel\twoheadrightarrow}
\title{Combinatorics of the tropical Torelli map}
\author{Melody Chan}
\email{mtchan@math.berkeley.edu}
\address{Department of Mathematics, University of California, Berkeley}
\date{\today}
\begin{document}

\begin{abstract}This paper is a combinatorial and computational study of the moduli space $\Mtrg$ of tropical curves of genus $g$, the moduli space $\Atrg$ of principally polarized tropical abelian varieties, and the tropical Torelli map.  These objects were studied recently by Brannetti, Melo, and Viviani.  Here, we give a new definition of the category of stacky fans, of which $\Mtrg$ and $\Atrg$ are objects and the Torelli map is a morphism.  We compute the poset of cells of $\Mtrg$ and of the tropical Schottky locus for genus at most 5.  
 We show that $\Atrg$ is Hausdorff, and we also construct a finite-index cover for the space $\Atr{3}$ which satisfies a tropical-type balancing condition.  Many different combinatorial objects, 
 including regular matroids, positive semidefinite forms, and metric~graphs, play~a~role.
\end{abstract}

\maketitle

\tableofcontents 

\section{Introduction}

This paper is a combinatorial and computational study of the tropical moduli spaces $\Mtrg$ and $\Atrg$ and the tropical Torelli map.

There is, of course, a vast (to say the least) literature on the subjects of algebraic curves and moduli spaces in algebraic geometry. For example, two well-studied objects are the moduli space $\mathcal{M}_g$ of smooth projective complex curves of genus $g$ and the moduli space $\mathcal{A}_g$ of $g$-dimensional principally polarized abelian varieties. The Torelli map
\[ t_g: \mathcal{M}_g \rightarrow \mathcal{A}_g \]
then sends a genus $g$ algebraic curve to its Jacobian, which is a certain $g$-dimensional complex torus. The image of $t_g$ is called the Torelli locus or the Schottky locus. The problem of how to characterize the Schottky locus inside $\mathcal{A}_g$ is already very deep.  See, for example, the survey of Grushevsky \cite{gr}.

The perspective we take in this paper is the perspective of tropical geometry \cite{ms}. From this viewpoint, one replaces algebraic varieties with piecewise-linear or polyhedral objects. These latter objects are amenable to combinatorial techniques, but they still carry information about the former ones. Roughly speaking, the information they carry has to do with what is happening ``at the boundary'' or ``at the missing points'' of the algebraic object. 

For example, the tropical analogue of $\mathcal{M}_g$, denoted $\Mtrg$, parametrizes certain weighted metric graphs, and it has a poset of cells corresponding to the boundary strata of the Deligne-Mumford compactification $\overline{\mathcal{M}_g}$ of $\mathcal{M}_g$.  Under this correspondence, a stable curve $C$ in $\overline{\mathcal{M}_g}$ is sent to its so-called dual graph.  The irreducible components of $C$, weighted by their geometric genus, are the vertices of this graph, and each node in the intersection of two components is recorded with an edge.
The correspondence in genus $2$ is shown in Figure \ref{f:m_2}.
A rigorous proof of this correspondence was given by Caporaso in \cite[Section 5.3]{c}.

We remark that the correspondence above yields dual graphs that are just graphs, not metric graphs.  There is not yet in the literature a sensible way to equip these graphs with edge lengths and thus produce an actual tropicalization map $\overline{\mathcal{M}_g}\rightarrow\Mtrg$.  So for now, the correspondence between $\overline{\mathcal{M}_g}$ and $\Mtrg$ is not as tight as it ultimately should be.  However, we anticipate that forthcoming work on Berkovich spaces by Baker, Payne, and Rabinoff will address this interesting point.

The starting point of this paper is the recent paper by Brannetti, Melo, and Viviani \cite{bmv}. In that paper, the authors rigorously define a plausible category for tropical moduli spaces called stacky fans. (The term ``stacky fan'' is due to the authors of \cite{bmv}, and is unrelated, as far as we know, to the construction of Borisov, Chen, and Smith in \cite{bcs}).  They further define the tropical versions $\Mtrg$ and $\Atrg$ of $\mathcal{M}_g$ and $\mathcal{A}_g$ and a tropical Torelli map between them, and prove many results about these objects, some of which we will review here.

Preceding that paper is the foundational work of Mikhalkin in \cite{mapplications} and of Mikhalkin and Zharkov \cite{mz}, in which tropical curves and Jacobians were first introduced and studied in detail.  The notion of tropical curves in \cite{bmv} is slightly different from the original definition, in that curves now come equipped with vertex weights.
We should also mention the work of Caporaso \cite{c}, who proves geometric results on $\Mtrg$ considered just as a topological space, and Caporaso and Viviani \cite{cv}, who prove a tropical Torelli theorem stating that the tropical Torelli map is ``mostly'' injective, as originally conjectured in \cite{mz}.

In laying the groundwork for the results we will present here, we ran into some inconsistencies in \cite{bmv}. It seems that the definition of a stacky fan there is inadvertently restrictive. In fact, it excludes $\Mtrg$ and $\Atrg$ themselves from being stacky fans. Also, there is a topological subtlety in defining $\Atrg$, which we will address in \S 4.4. Thus, we find ourself doing some foundational work here too. 

We begin in Section 2 by recalling the definition in \cite{bmv} of the tropical moduli space $\Mtrg$ and presenting computations, summarized in Theorem \ref{t:comp}, for $g \leq 5$.
With $\Mtrg$ as a motivating example, we attempt a better definition of stacky fans in Section 3. In Section 4, we define the space $\Atrg$, recalling the beautiful combinatorics of Voronoi decompositions along the way, and prove that it is Hausdorff.  Note that our definition of this space, Definition \ref{d:ag}, is different from the one in \cite[Section 4.2]{bmv}, and it corrects a minor error there.  In Section 5, we study the combinatorics of the zonotopal subfan.  We review the tropical Torelli map in Section 6; Theorem \ref{t:sch} presents computations on the tropical Schottky locus for $g \leq 5$. Tables 1 and 2 compare the number of cells in the stacky fans $\Mtrg$, the Schottky locus, and $\Atrg$ for $g \leq 5$. In Section 7, we partially answer a question suggested by Diane Maclagan: we give finite-index covers of $\Atr{2}$ and $\Atr{3}$ that satisfy a tropical-type balancing condition.

\medskip

{\bf Acknowledgments.}  The author thanks B.~Sturmfels, D.~Maclagan, and F.~Vallentin for helpful discussions, M.~Melo and F.~Viviani for comments on an earlier draft, F.~Vallentin for many useful references, K.~Vogtmann for the reference to \cite{br}, F.~Shokrieh for insight on Delone subdivisions, and R.~Masuda for much help with typesetting.  The author is supported by a Graduate Research Fellowship from the National Science Foundation.

\begin{figure}[hbtp]%
\includegraphics[width=\columnwidth,angle=0,scale=1.1]{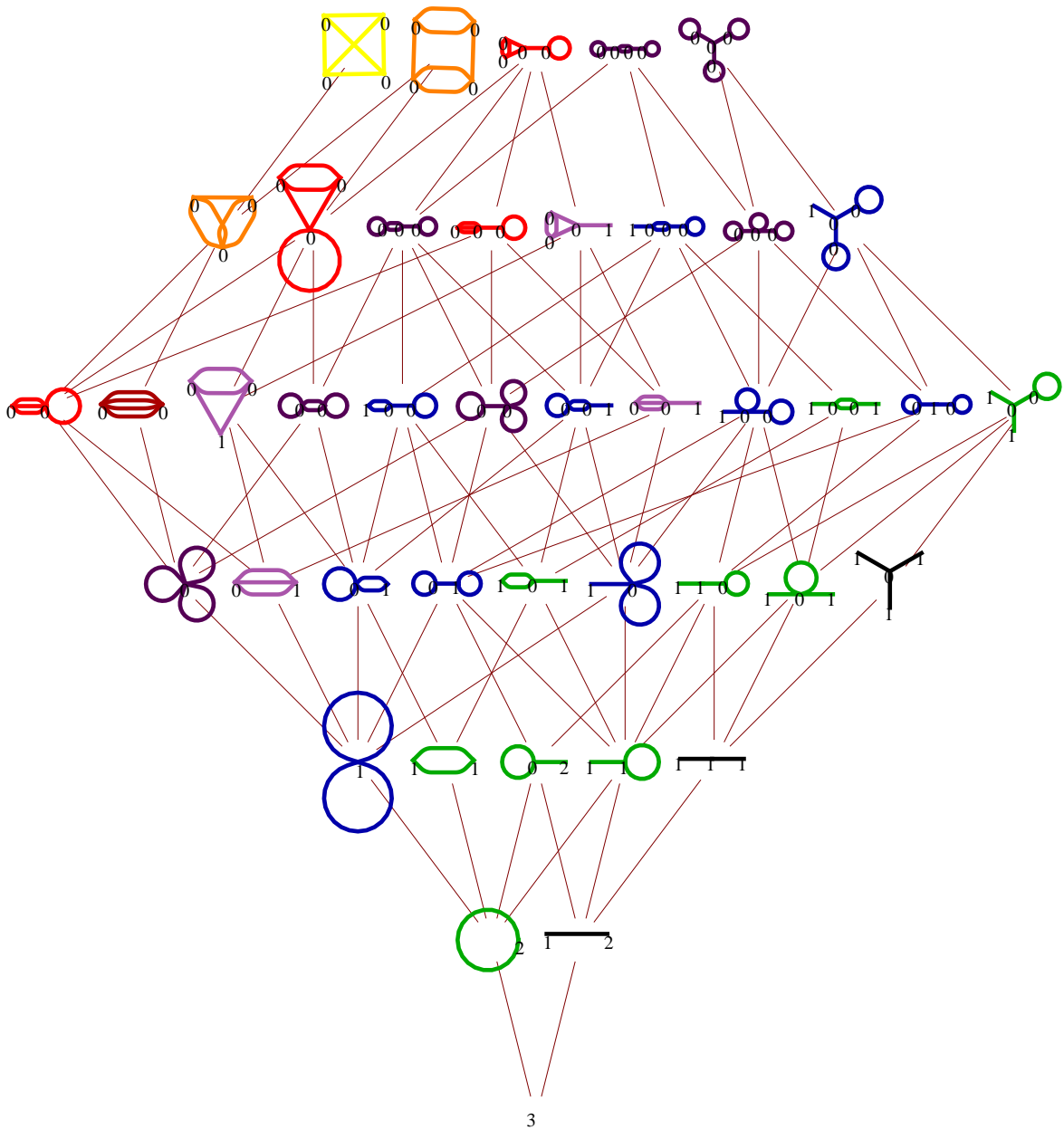}%
\vskip -.7in
\caption{Poset of cells of $M^{\text{tr}}_3$, color-coded according to their images in $A^{\text{tr}}_3$ via the tropical Torelli map.}%
\label{f:m_3}%
\end{figure}

\section{The moduli space of tropical curves}

In this section, we review the construction in \cite{bmv} of the moduli space of tropical curves of a fixed genus $g$ (see also \cite{mapplications}). This space is denoted $\Mtrg$. Then, we present explicit computations of these spaces in genus up to~$5$.

We will see that the moduli space $\Mtrg$ is not itself a tropical variety, in that it does not have the structure of a balanced polyhedral fan (\cite[Definition 3.3.1]{ms}).  That would be too much to expect, as it has automorphisms built in to its structure that precisely give rise to ``stackiness.''  Contrast this with the situation of moduli space $M_{0,n}$ of tropical rational curves with $n$ marked points, constructed and studied in \cite{gkm}, \cite{mrational}, and \cite{ss}.  As expected by analogy with the classical situation, this latter space is well known to have the structure of a tropical variety that comes from the tropical Grassmannian $Gr(2,n)$.

\subsection{Definition of tropical curves.}

Before constructing the moduli space of tropical curves, let us review the definition of a tropical curve. 

First, recall that a {\bf metric graph} is a pair $(G,l)$, where $G$ is a finite connected graph, loops and parallel edges allowed, and $l$ is a function
\[ l: E(G) \rightarrow \R_{>0} \]
on the edges of $G$. We view $l$ as recording lengths of the edges of $G$. The {\bf genus} of a graph $G$ is the rank of its first homology group:
\[ g(G) = |E|-|V|+1. \]

\begin{definition}
  A {\bf tropical curve} $C$ is a triple $(G,l,w)$, where $(G,l)$ is a metric graph (so $G$ is connected), and $w$ is a weight function
  \[ w:V(G) \rightarrow \Z_{\geq 0} \]
  on the vertices of $G$, with the property that every weight zero vertex has degree at least 3.  
\end{definition}

\begin{definition}
Two tropical curves $(G,l,w)$ and $(G',l',w')$ are isomorphic if there is an isomorphism of graphs $G \xrightarrow{\cong} G'$ that preserves edge lengths and preserves vertex weights. 
\end{definition}

We are interested in tropical curves only up to isomorphism.
When we speak of a tropical curve, we will really mean its isomorphism class.

\begin{definition}
\noindent Given a tropical curve $C = (G,l,w)$, write
\[ |w| := \sum_{v \in V(G)} w(v). \]
Then the {\bf genus} of $C$ is defined to be
\[ g(C) = g(G) + |w|. \]
The {\bf combinatorial type} of $C$ is the pair $(G,w)$, in other words, all of the data of $C$ except for the edge lengths.
\end{definition}

\begin{remark}
Informally, we view a weight of $k$ at a vertex $v$ as $k$ loops, based at $v$, of infinitesimally small length. Each infinitesimal loop contributes once to the genus of $C$. Furthermore, the property that only vertices with positive weight may have degree 1 or 2 amounts to requiring that, were the infinitesimal loops really to exist, every vertex would have degree at least 3.

Permitting vertex weights will ensure that the moduli space $\Mtrg$, once it is constructed, is complete. That is, a sequence of genus $g$ tropical curves obtained by sending the length of a loop to zero will still converge to a genus $g$ curve.
Of course, the real reason to permit vertex weights is so that the combinatorial types of genus $g$ tropical curves correspond precisely to dual graphs of stable curves in $\overline{\mathcal{M}_g}$, as discussed in the introduction and in \cite[Section 5.3]{c}.  See Figure \ref{f:m_2}.
\end{remark}

\begin{figure}%
\includegraphics[width=3.5in]{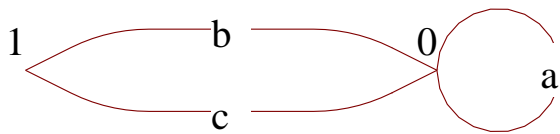}%
\caption{A tropical curve of genus 3.  Here, $a,b,c$ are fixed positive real numbers.}%
\label{f:curve}%
\end{figure}

Figure \ref{f:curve} shows an example of a tropical curve $C$ of genus 3. Note that if we allow the edge lengths $l$ to vary over all positive real numbers, we obtain all tropical curves of the same combinatorial type as $C$. This motivates our construction of the moduli space of tropical curves below. We will first group together curves of the same combinatorial type, obtaining one cell for each combinatorial type. Then, we will glue our cells together to obtain the moduli space.

\subsection{Definition of the moduli space of tropical curves}
Fix $g \geq 2$. Our goal now is to construct a moduli space for genus $g$ tropical curves, that is, a space whose points correspond to tropical curves of genus $g$ and whose geometry reflects the geometry of the tropical curves in a sensible way. The following construction is due to the authors of~\cite{bmv}.

First, fix a combinatorial type $(G,w)$ of genus $g$. What is a parameter space for all tropical curves of this type? 
Our first guess might be a positive orthant $\R_{>0}^{|E(G)|}$, that is, a choice of positive length for each edge of $G$.
But we have overcounted by symmetries of the combinatorial type $(G,w)$. For example, in Figure \ref{f:curve}, $(a,b,c) = (1,2,3)$ and $(a,b,c)=(1,3,2)$ give the same tropical curve. 

Furthermore, with foresight, we will allow lengths of zero on our edges as well, with the understanding that a curve with some zero-length edges will soon be identified with the curve obtained by contracting those edges. This suggests the following definition.

\begin{definition}
  Given a combinatorial type  $(G,w)$, let
  the {\bf automorphism group} $\Aut(G,w)$ be the set of all permutations $\varphi: E(G) \to E(G)$ that arise from weight-preserving automorphisms of $G$. That is, $\Aut(G,w)$ is the set of permutations $\varphi:E(G) \to E(G)$ that admit a permutation $\pi: V(G) \to V(G)$ which preserves the weight function $w$, and such that if an edge $e \in E(G)$ has endpoints $v$ and $w$, then $\varphi(e)$ has endpoints $\pi(v)$ and $\pi(w)$.

  Now, the group $\Aut(G,w)$ acts naturally on the set $E(G)$, and hence on the 
  orthant $\R_{\geq 0}^{E(G)}$, with the latter action given by permuting coordinates. We define $\overline{C(G,w)}$ to be the topological quotient space
  \[ \overline{C(G,w)} = \frac{\R_{\geq 0}^{E(G)}}{\Aut(G,w)}. \]
\end{definition}

Next, we define an equivalence relation on the points in the union $$\coprod \overline{C(G,w)},$$ as $(G,w)$ ranges over all combinatorial types of genus $g$. Regard a point $x \in \overline{C(G,w)}$ as an assignment of lengths to the edges of $G$. Now, given two points $x \in \overline{C(G,w)}$ and $x' \in \overline{C(G',w')}$, identify $x$ and $x'$ if one of them is obtained from the other by contracting all edges of length zero. Note that contracting a loop, say at vertex $v$, means deleting that loop and adding 1 to the weight of $v$. Contracting a nonloop edge, say with endpoints $v_1$ and $v_2$, means deleting that edge and identifying $v_1$ and $v_2$ to obtain a new vertex whose weight is $w(v_1) + w(v_2)$. 

Finally, let $\sim$ be the smallest equivalence relation containing the identification we have just defined. Now we glue the cells $\overline{C(G,w)}$ along $\sim$ to obtain our moduli space:

\begin{definition}
  The {\bf moduli space} $\Mtrg$ is the topological space
  \[ \Mtrg \,\,:= \,\,\coprod \overline{C(G,w)} /\!\sim, \]
  where the disjoint union ranges over all combinatorial types of genus $g$, and $\sim$ is the equivalence relation defined above.
\end{definition}

In fact, the space $\Mtrg$ carries additional structure: it is an example of a stacky fan. We will define the category of stacky fans in Section~3. 
\begin{figure}%
\includegraphics[width=4in]{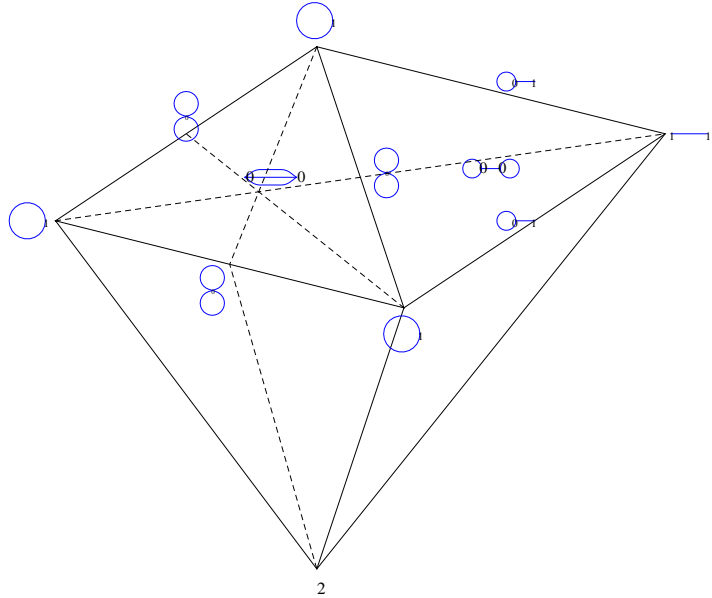}%
\caption{The stacky fan $M^{\text{tr}}_2$.}%
\label{f:m2_fan}%
\end{figure}

\begin{example}
 Figure \ref{f:m2_fan} is a picture of $M^{\text{tr}}_2$.  Its cells are quotients of polyhedral cones; the dotted lines represent symmetries, and faces labeled by the same combinatorial type are in fact identified.  The poset of cells, which we will investigate next for higher $g$, is shown in Figure~\ref{f:m_2}.
 It has two vertices, two edges and two $2$-cells.
\end{example}



\subsection{Explicit computations of $\Mtrg$}

Our next goal will be to compute the space $\Mtrg$ for $g$ at most 5. The computations were done in \textsc{Mathematica}, and the code is available at
$$\texttt{http://math.berkeley.edu/\~{}mtchan/torelli/}$$

What we compute, to be precise, is the partially ordered set $P_g$ on the cells of $\Mtrg$.
This poset is defined in Lemma~\ref{l:pg} below.
Our results, summarized in Theorem \ref{t:comp} below, provide independent verification of the first six terms of the sequence A174224 in \cite{sloane}, which counts the number of tropical curves of genus $g$:
$$0, 0, 7, 42, 379, 4555, 69808, 1281678,...$$  
This sequence, along with much more data along these lines, was first obtained by Maggiolo and Pagani by an algorithm described in \cite{mp}.

\begin{definition}
Given two combinatorial types $(G,w)$ and $(G',w')$ of genus $g$, we say that $(G',w')$ is a {\bf specialization}, or {\bf contraction}, of $(G,w)$, if it can be obtained from $(G,w)$ by a sequence of edge contractions. Here, contracting a loop means deleting it and adding 1 to the weight of its base vertex; contracting a nonloop edge, say with endpoints $v_1$ and $v_2$, means deleting the edge and identifying $v_1$ and $v_2$ to obtain a new vertex whose weight we set to $w(v_1)+w(v_2)$.
\end{definition}

\begin{lemma}\label{l:pg}
  The relation of specialization on genus $g$ combinatorial types yields a graded partially ordered set $P_g$ on the cells of $\Mtrg$. The rank of a combinatorial type $(G,w)$ is $|E(G)|$.
\end{lemma}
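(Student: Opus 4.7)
The plan is to establish the bijection between cells of $\Mtrg$ and isomorphism classes of combinatorial types first, then verify the poset axioms for specialization, then check gradedness.

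For the bijection, each combinatorial type $(G,w)$ contributes the cell $\overline{C(G,w)}$, and the relative interior $C(G,w) = \R_{>0}^{E(G)}/\Aut(G,w)$ corresponds to tropical curves with all positive edge lengths. Under $\sim$, such a point can only be identified with itself, since no edges are contracted. So the map from types to cells is injective, and by construction it is surjective. A point on the boundary of $\overline{C(G,w)}$ (some edges of length $0$) is identified under $\sim$ with a point in $C(G',w')$, where $(G',w')$ is obtained from $(G,w)$ by contracting exactly the zero-length edges. Thus the cell of $(G',w')$ lies in the closure of the cell of $(G,w)$ iff $(G',w')$ is a specialization of $(G,w)$.

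For the poset axioms: reflexivity comes from the empty contraction sequence. Transitivity is immediate from concatenation of contraction sequences. For antisymmetry, observe that each individual edge contraction strictly decreases $|E(G)|$ by one, so any nonempty specialization strictly decreases edge count. Hence if $(G,w) \leq (G',w') \leq (G,w)$, both sequences must be empty, forcing $(G,w) \cong (G',w')$.

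For gradedness with rank $|E(G)|$, the key point is that a single-edge contraction of any genus $g$ combinatorial type again produces a valid genus $g$ combinatorial type (and conversely, every specialization factors as a sequence of such single-edge covers). Contracting a loop at $v$ produces a vertex of weight $w(v)+1 \geq 1$, on which no degree condition is imposed. Contracting a nonloop edge with endpoints $v_1, v_2$ yields a vertex of weight $w(v_1)+w(v_2)$ and degree $\deg(v_1)+\deg(v_2)-2$; if the new weight is $0$, then $w(v_1)=w(v_2)=0$, so $\deg(v_i)\geq 3$ and the new degree is $\geq 4$. The genus is preserved in either case by the formula $g(C) = |E|-|V|+1+|w|$. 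So cover relations are exactly single-edge contractions, each dropping the rank by one, and every maximal chain from $(G,w)$ down to a minimal element (a one-vertex graph of weight $g$, possibly with loops of positive length remaining, though the truly minimal element has no edges) has length $|E(G)|$.

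The main obstacle is really the first step, reconciling the topological cells of the quotient $\Mtrg$ with the combinatorial types: one must carefully trace through the equivalence relation $\sim$ to see that relative interiors of the $\overline{C(G,w)}$ remain disjoint after gluing, so that no unexpected identifications occur. Once that is in hand, the poset and grading properties are straightforward consequences of how edge contraction interacts with $|E(G)|$.
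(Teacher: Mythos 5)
Your proof is correct and follows the same approach the paper uses, just with the details (closure of the set of genus-$g$ types under single-edge contraction, the degree-$\geq 3$ check on weight-zero vertices, antisymmetry via strict decrease of edge count) written out explicitly where the paper simply asserts ``it is clear that we obtain a poset.'' The one cosmetic slip is the aside about ``loops of positive length remaining'' in a combinatorial type, which has no lengths; the unique minimal element is the single weight-$g$ vertex with no edges, and every maximal chain down to it has length $|E(G)|$ because each cover drops the edge count by exactly one.
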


\begin{proof}
  It is clear that we obtain a poset; furthermore, $(G',w')$ is covered by $(G,w)$ precisely if $(G',w')$ is obtained from $(G,w)$ by contracting a single edge. The formula for rank then follows.
\end{proof}

For example, $P_2$ is shown in  Figure \ref{f:m_2}; it also appeared in
 \cite[Figure 1]{bmv}.  The poset $P_3$ is shown in Figure \ref{f:m_3}.  It is color-coded according to the Torelli map, as explained in Section 6.  

\begin{figure}
\centering
\mbox{\subfigure{\includegraphics[width=2in]{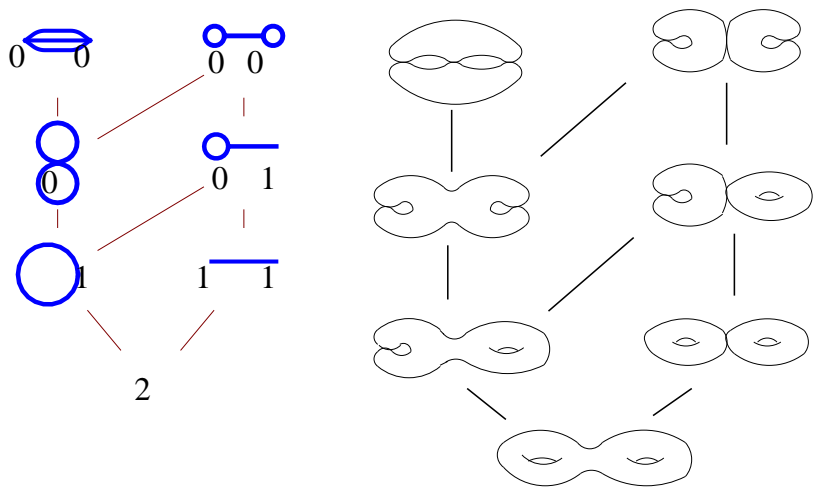}}\quad
\subfigure{\includegraphics[height=2.5in]{m_2classical.pstex} }}
\vskip -0.1in
\caption{Posets of cells of $M^{\text{tr}}_2$ (left) and of $\overline{\mathcal{M}_2}$ (right).} 
\label{f:m_2}
\end{figure}

Our goal is to compute $P_g$. We do so by first listing its maximal elements, and then computing all possible specializations of those combinatorial types.
For the first step, we use Proposition 3.2.4(i) in \cite{bmv}, which characterizes the maximal cells of $\Mtrg$: they correspond precisely to combinatorial types $(G,\overline{0})$, where $G$ is a connected $3$-regular graph of genus $g$, and $\overline{0}$ is the zero weight function on $V(G)$.
Connected, $3$-regular graphs of genus $g$ are equivalently characterized as connected, $3$-regular graphs on $2g-2$ vertices. These have been enumerated:

\begin{proposition}
  The number of maximal cells of $\Mtrg$ is equal to the $(g-1)^{st}$ term in the sequence
  \[ 2, 5, 17, 71, 388, 2592, 21096, 204638, 2317172, 30024276, 437469859, \dots \]
\end{proposition}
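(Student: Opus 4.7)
The plan is to reduce the claim to a classical graph enumeration problem that has been done by others, by identifying the maximal cells with a specific class of 3-regular multigraphs counted up to isomorphism.

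First, I would invoke Proposition 3.2.4(i) of [bmv] (cited just above the statement), which characterizes the maximal cells of $\Mtrg$ as corresponding bijectively to combinatorial types $(G,\overline{0})$ where $G$ is a connected 3-regular multigraph (loops and multiple edges allowed) of genus $g$ and $\overline{0}$ is the zero weight function. Intuitively this is because, by Lemma~\ref{l:pg}, the rank of $(G,w)$ in $P_g$ is $|E(G)|$, and one maximizes $|E(G)|$ under the constraint $g = |E(G)|-|V(G)|+1+|w|$ and the degree condition on weight-zero vertices. Indeed, from $|E(G)| = g-1+|V(G)|-|w|$ one wants to maximize $|V(G)|-|w|$, and the degree constraint forces $2|E(G)| \geq 3(|V(G)|-\{v : w(v)>0\}|)$; pushing both bounds to equality forces $w\equiv 0$ and $G$ to be 3-regular.

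Next, I would note the elementary observation that a connected 3-regular multigraph $G$ has genus $g$ if and only if $|V(G)|=2g-2$. This is just the Euler-characteristic calculation $2|E(G)| = 3|V(G)|$ combined with $g = |E(G)|-|V(G)|+1$, which gives $|V(G)| = 2(g-1)$ and $|E(G)|=3(g-1)$.

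Finally, the enumeration of connected 3-regular multigraphs on $2g-2$ vertices, up to graph isomorphism (and with loops and parallel edges allowed), is classical; the first values are exactly $2, 5, 17, 71, 388, 2592, \dots$ For example, for $g=2$ one has only the theta graph and the dumbbell (two loops joined by a bridge), giving $2$; for $g=3$ one lists $5$ cubic multigraphs on $4$ vertices, and so on. I would simply cite the OEIS entry for this sequence (together with the references there, e.g.\ to the generating-function work of Robinson--Wormald or computations using \texttt{nauty}) for the remaining values.

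The only step that requires real work is the enumeration itself, and that is the piece I am offloading to the literature. The contribution of this proposition is not a new enumeration but the identification of the maximal cells of $\Mtrg$ with this already-enumerated family; accordingly, the main thing to verify carefully is the combinatorial characterization in the first two paragraphs, after which the count follows by citation.
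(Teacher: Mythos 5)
Your proposal follows the paper's own proof exactly: both invoke \cite[Prop.\ 3.2.4(i)]{bmv} to identify maximal cells with connected $3$-regular weight-zero graphs of genus $g$, translate the genus condition into the vertex count $|V(G)|=2g-2$, and then cite the known enumeration (the paper points to OEIS A005967). The extra paragraph you include sketching \emph{why} the maximal cells are the $3$-regular ones is a reasonable heuristic but is delegated to \cite{bmv} in the paper just as it is in your write-up, so this is not a genuinely different route.
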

\begin{proof}
  This is sequence A005967 in \cite{sloane}, whose $g^{th}$ term is the number of connected $3$-regular graphs on $2g$ vertices.
\end{proof}

In fact, the connected, $3$-regular graphs of genus $g$ have been conveniently written down for $g$ at most 
$6$. This work was done in the 1970s by Balaban, a chemist whose interests along these lines were in molecular applications of graph theory. The graphs for $g \leq 5$ appear in his article \cite{b}, and the $388$ genus $6$ graphs appear in \cite{b2}.

Given the maximal cells of $\Mtrg$, we can compute the rest of them:

\begin{algorithm} \ \\
  Input: Maximal cells of $\Mtrg$ \\
  Output: Poset of all cells of $\Mtrg$
  \begin{enumerate}
    \item[1.] Initialize $P_g$ to be the set of all maximal cells of $\Mtrg$, with no relations. Let $L$ be a list of elements of $P_g$.
    \item[2.] While $L$ is nonempty:
    \begin{itemize}
      \item[] Let $(G,w)$ be the first element of $L$. Remove $(G,w)$ from $L$. Compute all 1-edge contractions of $(G,w)$. \\
          For each such contraction $(G',w')$:
          \begin{itemize}
            \item[] If $(G',w')$ is isomorphic to an element $(G'',w'')$ already in the poset $P_g$, add a cover relation $(G'',w'') \leq (G,w)$. \\
                Else, add $(G',w')$ to $P_g$ and add a cover relation $(G',w') \leq (G,w)$. Add $(G',w')$ to the list $L$.
          \end{itemize}
    \end{itemize}
    \item[3.] Return $P_g$.
  \end{enumerate}
\end{algorithm}

We implemented this algorithm in \textsc{Mathematica}. The most costly step is computing graph isomorphisms in Step 2. 
Our results are summarized in the following theorem. By an $f$-vector of a poset, we mean the vector whose $i$-th entry is the number of elements of rank~$i-1$.

\begin{theorem} We obtained the following computational results:
\label{t:comp}{\ } \begin{enumerate}
	\item 
  The moduli space $M^{tr}_3$ has 42 cells and $f$-vector
  \[ (1,2,5,9,12,8,5). \]
  Its poset of cells $P_3$ is shown in Figure \ref{f:m_3}.
  \item
  The moduli space $M^{tr}_4$ has 379 cells and $f$-vector
  \[ (1,3,7,21,43,75,89,81,42,17). \]
  \item
   The moduli space $M^{tr}_5$ has 4555 cells and $f$-vector
\[ (1,3,11,34,100,239,492,784,1002,926,632,260,71).\]
\end{enumerate}
\end{theorem}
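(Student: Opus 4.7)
The plan is to execute the algorithm stated immediately before the theorem on each of $g = 3, 4, 5$ and tabulate the results by rank. The input in each case is the list of maximal cells, namely the connected $3$-regular (weight-$\overline{0}$) graphs of the appropriate genus, which by the preceding proposition number $5, 17, 71$ respectively. For these small genera I would take Balaban's tabulation \cite{b} directly rather than regenerate the lists from scratch, since it gives a concrete and checkable starting point.

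With the maximal cells in hand, I would implement the downward closure as a breadth-first exploration of the specialization poset: maintain a queue $L$ of combinatorial types whose one-edge contractions have not yet been expanded, and for each $(G,w) \in L$ and each edge $e \in E(G)$, form the contraction $(G/e, w')$ (collapsing a loop at $v$ increases $w(v)$ by one; collapsing a nonloop edge merges endpoints and sums weights). Each contraction is then tested against the existing poset for isomorphism and either merged with an existing cell or added as a new one, with a cover relation recorded in either case. Termination is guaranteed because $|E(G)|$ strictly decreases under contraction and is bounded above by $3(g-1)$ on the maximal cells. The $f$-vector is read off at the end by binning cells according to $|E(G)|$, which by Lemma \ref{l:pg} is exactly the rank.

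The main obstacle, as noted in the paper, is the weighted graph isomorphism testing in the inner loop: naive checking against every existing cell is quadratic in $|P_g|$ and for $g=5$ already involves thousands of cells with up to twelve edges and multi-edges and loops. I would address this by precomputing a cheap invariant (degree sequence together with the sorted multiset of vertex weights, plus perhaps a small-depth refinement à la Weisfeiler--Leman) to bucket cells, and only call a full isomorphism routine within a bucket; in \textsc{Mathematica} one can feed the multigraph plus weight labels to \texttt{IsomorphicGraphQ} after encoding loops and parallel edges via auxiliary subdivision vertices marked distinctly from the original ones, so that the weight-preserving automorphism group of $(G,w)$ is recovered correctly.

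Finally, I would perform two sanity checks before reporting the $f$-vectors. First, compare the top-rank entry against the proposition's count of $3$-regular graphs, and compare the total cell counts $42, 379, 4555$ against the OEIS sequence A174224 mentioned in the text, which gives independent corroboration. Second, verify that the genus of every cell in $P_g$ equals $g$ (using $g(C) = |E(G)| - |V(G)| + 1 + |w|$) and that for each cover relation $(G',w') \lessdot (G,w)$ the ranks differ by exactly one, i.e.\ the contraction is by a single edge. Passing both checks, together with inspection of $P_3$ against Figure \ref{f:m_3}, is what I would take as the proof of Theorem \ref{t:comp}.
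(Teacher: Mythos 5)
Your proposal is correct and is essentially the same approach the paper takes: the paper's justification for Theorem~\ref{t:comp} is precisely an implementation in \textsc{Mathematica} of the algorithm stated just before it, seeded with Balaban's lists of connected cubic graphs and with weighted-multigraph isomorphism testing as the computational bottleneck. Your added details --- bucketing by cheap invariants before calling a full isomorphism routine, encoding loops and parallel edges via marked subdivision vertices, and the genus/rank and OEIS~A174224 cross-checks --- are reasonable implementation and verification choices but not a departure from the paper's method.
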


The posets $P_4$ and $P_5$ are much too large to display here, but are available at
the website above.

\begin{remark}
The data of $P_3$, illustrated in Figure \ref{f:m_3}, is related, but not identical, to the data obtained by T.~Brady in \cite[Appendix A]{br}.  In that paper, the author enumerates the cells of a certain deformation retract, called $K_3$, of Culler-Vogtmann Outer Space \cite{cullervogtmann}, modulo the action of the group $\Out(F_3)$.  In that setting, one only needs to consider bridgeless graphs with all vertices of weight 0, thus throwing out all but 8 cells of the poset $P_3$.  In turn, the cells of $K_3/\Out(F_n)$ correspond to chains in the poset on those eight cells.  It is these chains that are listed in Appendix A of \cite{br}.  We believe that further exploration of the connection between Outer Space and $M^{\text{tr}}_g$ would be interesting to researchers in
both tropical geometry and geometric group theory.
\end{remark}

\begin{remark}
What is the topology of $\Mtrg$?  Of course, $\Mtrg$ is always contractible: there is a deformation retract onto the unique 0-dimensional cell.  So to make this question interesting, we restrict our attention to the subspace $M^{tr'}_g$ of $\Mtrg$ consisting of graphs with total edge length 1, say.  For example, by looking at Figure \ref{f:m2_fan}, we can see that $M^{tr'}_2$ is still contractible.  We would like to know if the space $M^{tr'}_g$ is also contractible for larger $g$.
\end{remark}
 
\section{Stacky fans}
In Section 2, we defined the space $\Mtrg$. In Sections 4 and 6, we will define the space $\Atrg$ and the Torelli map $\ttrg: \Mtrg \rightarrow \Atrg$. For now, however, let us pause and define the category of stacky fans, of which $\Mtrg$ and $\Atrg$ are objects and $\ttrg$ is a morphism. The reader is invited to keep $\Mtrg$ in mind as a running example of a stacky fan.

The purpose of this section is to offer a new definition of stacky fan, Definition \ref{d:sf}, which we hope fixes an inconsistency in the definition by Brannetti, Melo, and Viviani, in Section 2.1 of \cite{bmv}.  We believe that their condition for integral-linear gluing maps is too restrictive and fails for $\Mtrg$ and $\Atrg$.  However, we do think that their definition of a stacky fan morphism is correct, so we repeat it 
in Definition \ref{d:sfm}.  
We also prove that $\Mtrg$ is a stacky fan according to our new definition.  The proof for $\Atrg$ is deferred to \S 4.3.

\begin{definition}
A {\bf rational open polyhedral cone} in $\R^n$ is a subset of $\R^n$ of the form
$\, \{a_1x_1 + \cdots + a_tx_t : a_i \in \R_{>0} \}$,
for some  fixed vectors $x_1, \dots, x_t \in \Z^n$.  By convention, we also allow the trivial cone $\{0\}$.
\end{definition}

\begin{definition}
\label{d:sf}
  Let $X_1 \subseteq \R^{m_1}, \dots, X_k \subseteq \R^{m_k}$ be full-dimensional rational open polyhedral cones. For each $i = 1, \dots, k$, let $G_i$ be the subgroup of $GL_{m_i}(\Z)$ which fixes the cone $X_i$ setwise, and let $X_i/G_i$ denote the topological quotient thus obtained.  The action of $G_i$ on $X_i$ extends naturally to an action of $G_i$ on the Euclidean closure $\overline{X_i}$, and we let $\overline{X_i}/G_i$ denote the quotient.

Suppose that we have a topological space $X$ and, for each $i = 1, \dots, k$, a continuous map
\[ \alpha_i: \frac{\overline{X_i}}{G_i} \rightarrow X. \]
Write $C_i = \alpha_i\left(\frac{X_i}{G_i}\right)$ and $\overline{C_i} = \alpha_i\left(\frac{\overline{X_i}}{G_i}\right)$ for each $i$.  Given $Y\subseteq X_i$, we will abuse notation by writing $\alpha_i(Y)$ for $\alpha_i$ applied to the image of $Y$ under the map $\overline{X_i}\twoheadrightarrow \overline{X_i}/G_i$. 

Suppose that the following properties hold for each index $i$:
\begin{enumerate}
  \item The restriction of $\alpha_i$ to $ \frac{X_i}{G_i} $ is a homeomorphism onto $C_i$,
  \item We have an equality of sets $X = \coprod C_i$,
  \item For each cone $\overline{X_i}$ and for each face $F_i$ of $\overline{X_i}$, $\alpha_i(F_i) = \overline{C_l}$ for some $l$. Furthermore, $\dim F_i = \dim \overline{X_l} = m_l$, and there is an $\R$-invertible linear map $L: \sspan \langle F_i \rangle \cong \R^{m_l} \rightarrow \R^{m_l}$ such that 

\begin{itemize}
 \item $L(F_i) = \overline{X_l}$,
\item $L(\Z^{m_i} \cap \sspan(F_i)) = \Z^{m_l}$, and
\item the following diagram commutes:
      \[ \xymatrix@H=0.1in{
      F_i \ar[dd]_L \ar[rrd]^{\alpha_i} & & \\
      & & \overline{C_l} \\
      \overline{X_l} \ar[rru]_{\alpha_l}& &} \]
\end{itemize}
We say that $\overline{C_l}$ is a {\bf stacky face} of $\overline{C_i}$ in this situation.
  \item For each pair $i,j$,
  \[ \overline{C_i} \cap \overline{C_j} = C_{l_1} \cup \dots \cup C_{l_t},\]
   where $C_{l_1}, \dots, C_{l_t}$ are the common stacky faces of $\overline{C_i}$ and $\overline{C_j}$.
\end{enumerate}
Then we say that $X$ is a {\bf stacky fan}, with cells $\{X_i/G_i\}$.
\end{definition}

\begin{remark}
Condition (iii) in the definition above essentially says that $\overline{X_i}$ has a face $F_i$ that looks ``exactly like'' $\overline{X_l}$, even taking into account where the lattice points are.  It plays the role of the usual condition on polyhedral fans that the set of cones is closed under taking faces.  Condition (iv) replaces the usual condition on polyhedral fans that the intersection of two cones is a face of each. Here, we instead allow unions of common faces.
\end{remark}

\begin{theorem}
  The moduli space $\Mtrg$ is a stacky fan with cells
  \[ {C(G,w)} = \frac{\R^{E(G)}_{>0}}{\Aut(G,w)} \]
  as $(G,w)$ ranges over genus $g$ combinatorial types. Its points are in bijection with tropical curves of genus $g$.
\end{theorem}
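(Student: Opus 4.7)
The plan is to verify the four axioms in Definition \ref{d:sf} for the following data, indexed by combinatorial types $(G,w)$ of genus $g$: the full-dimensional rational cone $X_{(G,w)} = \R^{E(G)}_{>0} \subseteq \R^{E(G)}$; the group $\Aut(G,w)$ realized inside $GL_{|E(G)|}(\Z)$ by permutation matrices (so that it preserves $X_{(G,w)}$ setwise); and the continuous map $\alpha_{(G,w)} \colon \overline{C(G,w)} \to \Mtrg$ induced by the inclusion into $\coprod \overline{C(G,w)}$ followed by the quotient by $\sim$. As a preliminary step I would establish the claimed bijection with tropical curves: a point of $\coprod \overline{C(G,w)}$ is an $\Aut(G,w)$-orbit of length assignments on some $G$ with possibly some zero-length edges, and contracting exactly those zero-length edges produces a genuine tropical curve of genus $g$; two such points (possibly in different cells) are $\sim$-equivalent if and only if they produce isomorphic tropical curves. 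Passing to the quotient then yields a well-defined bijection between $\Mtrg$ and isomorphism classes of genus $g$ tropical curves, under which the interior $C(G,w)$ corresponds to curves of type $(G,w)$.

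Condition (ii) is immediate from this bijection, since every tropical curve has a unique combinatorial type. For condition (i), injectivity on the interior $X_{(G,w)}/\Aut(G,w)$ follows because interior points have no zero-length edges and are therefore only $\sim$-identified within a single $\Aut(G,w)$-orbit. To upgrade this to a homeomorphism I would note that $\alpha_{(G,w)}^{-1}(C(G,w)) = X_{(G,w)}/\Aut(G,w)$, because every boundary point contracts to a type with strictly fewer edges; hence $C(G,w)$ is open in $\overline{C(G,w)}$, and a diagram chase using the universal property of the quotient topology defining $\Mtrg$ shows that $\alpha_{(G,w)}$ restricts to an open map on $X_{(G,w)}/\Aut(G,w)$.

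Condition (iii) is the heart of the verification. Each face $F$ of $\overline{X_{(G,w)}} = \R^{E(G)}_{\geq 0}$ is cut out by setting the coordinates indexed by some $S \subseteq E(G)$ to zero, and contracting the edges in $S$ produces a combinatorial type $(G',w')$ whose edge set is naturally identified with $E(G) \setminus S$; since contracting any single edge preserves genus and the degree condition on weight-zero vertices, this is indeed a valid combinatorial type of genus $g$. I would take $L \colon \sspan\langle F \rangle \to \R^{E(G')}$ to be the coordinate-relabeling induced by this identification. By construction, $L$ is an $\R$-linear isomorphism sending $F$ onto $\overline{X_{(G',w')}} = \R^{E(G')}_{\geq 0}$ and the sublattice $\Z^{E(G)} \cap \sspan\langle F \rangle$ isomorphically onto $\Z^{E(G')}$; commutativity of the diagram is tautological, because both paths send a length assignment vanishing on $S$ to the tropical curve of type $(G',w')$ obtained by assigning the remaining lengths to the edges of $G'$.

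For condition (iv), I would first combine (ii) and (iii) to describe $\overline{C(G,w)}$ as the union of the cells $C(G'',w'')$ indexed by specializations of $(G,w)$, recovering Lemma \ref{l:pg}. The intersection $\overline{C(G,w)} \cap \overline{C(G',w')}$ is then precisely the union of those $C(G'',w'')$ for which $(G'',w'')$ is a common specialization of $(G,w)$ and $(G',w')$, i.e.~a common stacky face. I anticipate that the main obstacle is the homeomorphism claim in condition (i), since it involves carefully comparing the quotient topology defining $\Mtrg$ with the subspace topology on each open cell; conditions (iii) and (iv), by contrast, reduce cleanly to the combinatorics of edge contractions, which is encoded directly in the face structure of the closed orthants $\overline{X_{(G,w)}}$.
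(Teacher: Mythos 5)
Your plan is the same as the paper's, and your treatment of the bijection and of conditions (ii), (iii), and (iv) is essentially identical to what appears there (for (iii), the coordinate relabeling induced by $\pi\colon E(G)\setminus S \to E(G')$ is exactly the paper's $L_\pi$). The gap is in condition (i), which you correctly flag as the hard step but do not actually carry out.

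Two specific problems there. First, the inference ``$\alpha_{(G,w)}^{-1}(C(G,w)) = X_{(G,w)}/\Aut(G,w)$, hence $C(G,w)$ is open in $\overline{C(G,w)}$'' does not go through: for that you would need $\alpha_{(G,w)}\colon \overline{X}_{(G,w)}/\Aut(G,w) \to \overline{C(G,w)}$ to be a quotient map, but $\overline{C(G,w)}$ carries the subspace topology from $\Mtrg$, not the quotient topology from $\alpha_{(G,w)}$, and we do not yet know these agree. Second, the appeal to ``a diagram chase using the universal property of the quotient topology'' glosses over the real issue, which is that $X_{(G,w)}/\Aut(G,w)$ is \emph{not} a saturated subset of $\coprod \overline{C(G',w')}$ for the quotient map $q$: a point of $C(G,w)$ has $q$-preimages lying in many other closed cells $\overline{C(G'',w'')}$ (namely, all length assignments on supertypes $(G'',w'')$ whose zero-length edges contract to $(G,w)$). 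So the standard fact that a quotient map restricted to a saturated open is again a quotient map does not apply, and any argument has to track explicitly how $\sim$ moves sets between cells.

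The paper resolves this by phrasing condition (i) in terms of closed sets and then doing concrete work at the level of cones. It shows that for $W$ closed in $\overline{C(G,w)}$ the image $\alpha_{(G,w)}(W)$ is closed in $\Mtrg$ (which suffices, using disjointness of the open cells to cut back down to $C(G,w)$). To prove this it re-describes $\Mtrg$ as the quotient of $\coprod_{(G',w')}\R^{E(G')}_{\geq 0}$ by the finitely many face-identifying linear maps $L_\pi$, lifts $W$ to $\tilde W \subseteq \R^{E(G)}_{\geq 0}$, and observes that in each orthant $\R^{E(G')}_{\geq 0}$ the set of points identified with some point of $\tilde W$ is a finite union of closed sets (images of $\tilde W$ intersected with faces under the linear isomorphisms $L_\pi$) and is $\Aut(G',w')$-invariant, hence descends to a closed set in $\overline{C(G',w')}$. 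That explicit lift-and-saturate argument is the content missing from your sketch; without something of this kind, condition (i) remains unverified.
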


\begin{proof}
  Recall that
  \[ \Mtrg = \frac{\coprod \overline{C(G,w)}}{\sim}, \]
  where $\sim$ is the relation generated by contracting zero-length edges. Thus, each equivalence class has a unique representative $(G_0,w,l)$ corresponding to an honest metric graph: one with all edge lengths positive. This gives the desired bijection.

  Now we prove that $\Mtrg$ is a stacky fan. For each $(G,w)$, let
  \[ \alpha_{G,w}: \overline{C(G,w)} \rightarrow \frac{\coprod\overline{C(G',w')}}{\sim} \]
  be the natural map. Now we check each of the requirements to be a stacky fan, in the order (ii), (iii), (iv), (i).

  For (ii), the fact that
  \[ \Mtrg = \coprod C(G,w) \]
  follows immediately from the observation above.  
  
  Let us prove (iii).  Given a combinatorial type $(G,w)$, the corresponding closed cone is $\R^{E(G)}_{\geq 0}$. A face $F$ of $\R^{E(G)}_{\geq 0}$ corresponds to setting edge lengths of some subset $S$ of the edges to zero. Let $(G',w')$ be the resulting combinatorial type, and let $\pi: E(G) \setminus S \rightarrow E(G')$ be the natural bijection (it is well-defined up to $(G',w')$-automorphisms, but this is enough). Then $\pi$ induces an invertible linear map
  \[ L_\pi: \R^{E(G)\setminus S} \longrightarrow \R^{E(G')} \]
  with the desired properties. Note also that the stacky faces of $\overline{C(G,w)}$ are thus all possible specializations $\overline{C(G',w')}$.

  For (iv), given two combinatorial types $(G,w)$ and $(G',w')$, then
  \[ \overline{C(G,w)} \cap \overline{C(G',w')} \]
  consists of the union of all cells corresponding to common specializations of $(G,w)$ and $(G',w')$. As noted above, these are precisely the common stacky faces of $\overline{C(G,w)}$ and $\overline{C(G',w')}$.

 For (i), we show that $\alpha_{G,w}$ restricted to
$C(G,w) = \R^{E(G)}_{>0}/\Aut(G,w)$
  is a homeomorphism onto its image. It is continuous by definition of $\alpha_{G,w}$ and injective by definition of $\sim$. Let $V$ be closed in $C(G,w)$, say $V = W \cap C(G,w)$ where $W$ is closed in $\overline{C(G,w)}$. To show that $\alpha_{G,w}(V)$ is closed in $\alpha_{G,w}(C(G,w))$,
  it suffices to show that $\alpha_{G,w}(W)$ is closed in $\Mtrg$. Indeed,
  the fact that the cells $C(G,w)$ are pairwise disjoint in $\Mtrg$ implies that
  \[ \alpha_{G,w}(V) = \alpha_{G,w}(W) \cap \alpha_{G,w}(C(G,w)). \]
 Now, note that $\Mtrg$ can equivalently be given as the quotient of the space $$\coprod_{(G,w)} \R^{E(G)}_{\ge 0}$$ by all possible linear maps $L_\pi$ arising as in the proof of (iii).  All of the maps $L_\pi$ identify faces of cones with other cones.  Now let $\tilde{W}$ denote the lift of $W$ to $ \R^{E(G)}_{\ge 0}$; then for any other type $(G',w')$, we see that the set of points in $ \R^{E(G')}_{\ge 0}$ that are identified with some point in  $\tilde{W}$ is both closed and $\Aut(G',w')$-invariant, and passing to the quotient $ \R^{E(G')}_{\ge 0}/\Aut(G',w')$ gives the claim.
\end{proof}

We close this section with the definition of a morphism of stacky fans.  The tropical Torelli map, which we will define in Section~6, will be an~example.

\begin{definition}\cite[Definition 2.1.2]{bmv}\label{d:sfm}
  Let $$X_1 \subseteq \R^{m_1}, \dots, X_k \subseteq \R^{m_k},
  \,\,Y_1 \subseteq \R^{n_1},\dots,Y_l \subseteq \R^{n_l}$$ be full-dimensional rational open polyhedral cones.
  Let $G_1 \subseteq GL_{m_1}(\Z),$ $\dots, G_k \subseteq GL_{m_k}(\Z),H_1 \subseteq GL_{n_1}(\Z), \dots, H_l \subseteq GL_{n_l}(\Z)$ be groups stabilizing $X_1$, $\dots$, $X_k$, $Y_1, \dots, Y_l$ respectively. Let $X$ and $Y$ be stacky fans with cells 
$$\left\{\frac{X_i}{G_i}\right\}^k_{i=1},\left\{\frac{Y_j}{H_j}\right\}^l_{j=1},$$
Denote by $\alpha_i$ and $\beta_j$ the maps $\frac{\overline{X_i}}{G_i} \rightarrow X$ and $\frac{\overline{Y_j}}{H_j} \rightarrow Y$ that are part of the stacky fan data of $X$ and $Y$.

  Then a {\bf morphism of stacky fans} from $X$ to $Y$ is a continuous map $\pi: X \rightarrow Y$ such that for each cell $X_i / G_i$ there exists a cell $Y_j/H_j$ such that
  \begin{enumerate}
    \item $\pi\left(\alpha_i\left(\frac{X_i}{G_i}\right)\right) \subseteq \beta_j \left(\frac{Y_j}{H_j}\right)$, and
    \item there exists an integral-linear map
    \[ L: \R^{m_i} \rightarrow \R^{n_j}, \]
    restricting to a map
    \[ L: X_i \rightarrow Y_j, \]
    such that the diagram below commutes:
    \[ \xymatrix{
X_i \ar[rr]\ar[d]_L & &\alpha_i(X_i/G_i)\ar[d]^\pi \\
Y_j \ar[rr] & &\beta_j(Y_j/H_j)
} \]
  \end{enumerate}
\end{definition}

\section{Principally polarized tropical abelian varieties}
The purpose of this section is to construct the moduli space of principally polarized tropical abelian varieties, denoted $\Atrg$. Our construction is different from the one in \cite{bmv}, though it is still very much inspired by the ideas in that paper. The reason for presenting a new construction here is that a topological subtlety in the construction there prevents their space from being a stacky fan as claimed in \cite[Thm.~4.2.4]{bmv}.

We begin in \S 4.1 by recalling the definition of a principally  polarized tropical abelian variety. In 
\S 4.2, we review the theory of Delone subdivisions and the main theorem of Voronoi reduction theory. We construct $\Atrg$ in \S 4.3 and prove that it is a stacky fan and that it is Hausdorff.  We remark on the difference between our construction and the one in \cite{bmv} in \S 4.4.

\subsection{Definition of principally polarized tropical abelian variety} Fix $g \geq 1$. Following \cite{bmv}, we define a {\bf principally polarized tropical abelian variety}, or {\bf pptav} for short, to be a pair
\[ (\R^g/ \Lambda, Q), \]
where $\Lambda$ is a lattice of rank $g$ in $\R^g$ (i.e. a discrete subgroup of $\R^g$ that is isomorphic to $\Z^g$), and $Q$ is a positive semidefinite quadratic form on $\R^g$ whose nullspace is rational with respect to $\Lambda$. By this, we mean that the subspace $\ker(Q) \subseteq \R^g$ has a vector space basis whose elements are each of the form
\[ a_1 \lambda_1 + \cdots + a_k\lambda_k,\quad a_i \in \Q, \lambda_i \in \Lambda. \]
We say that $Q$ has {\bf rational nullspace} if its nullspace is rational with respect to $\Z^g$.

We say that two pptavs $(\R^g/\Lambda,Q)$ and $(\R^g/\Lambda',Q')$ are isomorphic if there exists a matrix $X \in GL_g(\R)$ such that
\begin{itemize}
  \item left multiplication by $X^{-1}$ sends $\Lambda$ isomorphically to $\Lambda'$, that is, the map $X^{-1}:\R^g \to \R^g$ sending a column vector $v$ to $X^{-1}v$ restricts to an isomorphism of lattices $\Lambda$ and $\Lambda'$; and
  \item $Q' = X^T Q X$.
\end{itemize} 

Note that any pptav $(\R^g/\Lambda,Q)$ is isomorphic to one of the form $(\R^g/\Z^g,Q')$, namely by taking $X$ to be any matrix sending $\Z^g$ to $\Lambda$ and setting $Q' = X^TQX$. Furthermore, $(\R^g/\Z^g,Q)$ and $(\R^g/\Z^g,Q')$ are isomorphic if and only if  there exists $X \in GL_g(\Z)$ with  $X^TQX = Q'$.

We are interested in pptavs only up to isomorphism.  Therefore, we might be tempted to define the moduli space of pptavs to be the quotient of the topological space $\widetilde{S}^g_{\geq 0}$, the space of positive semidefinite matrices with rational nullspace, by the action of $GL_g(\Z)$. But, as we will see in Section 4.4., this quotient space 
is unexpectedly thorny: for $g \geq 2$, it is not even Hausdorff! 

It turns out that we can solve this problem by first grouping matrices together into cells according to their Delone subdivisions, and then gluing the cells together to obtain the full moduli space. We review the theory of Delone subdivisions next.

\subsection{Voronoi reduction theory}
Recall that a matrix has rational nullspace if its kernel has a basis defined over $\Q$.

\begin{definition}
  Let $\widetilde{S}^g_{\geq 0}$ denote the set of $g \times g$ positive semidefinite matrices with rational nullspace. By regarding a $g \times g$ symmetric real matrix as a vector in $\R^{\binom{g+1}{2}}$, with one coordinate for each diagonal and above-diagonal entry of the matrix, we view $\widetilde{S}^g_{\geq 0}$ as a subset of $\R^{\binom{g+1}{2}}$.
\end{definition}

The group $GL_g(\Z)$ acts on $\widetilde{S}^g_{\geq 0}$ on the right by changing basis:
\[ Q \cdot X = X^TQX, \text{ for all } X \in GL_g(\Z), Q \in \widetilde{S}^g_{\geq 0}. \]

\begin{definition}
  Given $Q \in \widetilde{S}^g_{\geq 0}$, define $\Del(Q)$ as follows. Consider the map $l : \Z^g \to \Z^g \times \R$ sending $x \in \Z^g$ to $(x,x^TQx)$. View the image of $l$ as an infinite set of points in $\R^{g+1}$, one above each point in $\Z^g$, and consider the convex hull of these points. The lower faces of the convex hull (the faces that are visible from $(0,-\infty)$) can now be projected to $\R^g$ by the map $\pi:\R^{g+1}\to \R^g$ that forgets the last coordinate. This produces an infinite periodic polyhedral subdivision of $\R^g$, called the {\bf Delone subdivision} of $Q$ and denoted $\Del(Q)$.
\end{definition}

Now, we group together matrices in $\widetilde{S}^g_{\geq 0}$ according to the Delone subdivisions to which they correspond.

\begin{definition}
  Given a Delone subdivision $D$, let
  \[ \sigma_D = \{ Q \in \widetilde{S}^g_{\geq 0} : \Del(Q) = D \}. \]
\end{definition}

\begin{proposition}\cite{voronoi1908}
The set $\sigma_D$ is an open rational polyhedral cone in $\widetilde{S}^g_{\geq 0}$.
\end{proposition}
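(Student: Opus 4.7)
The plan is to exhibit $\sigma_D$ as the intersection of $\widetilde{S}^g_{\geq 0}$ with finitely many linear equations and strict linear inequalities in the entries of $Q$, all with integer coefficients. Since $\Del(tQ) = \Del(Q)$ for any $t > 0$, the set $\sigma_D$ is closed under positive scaling, so such a description will immediately give it the structure of an open rational polyhedral cone in the sense of the definition recalled earlier in the paper.

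The characterization I would use is the following: $\Del(Q) = D$ if and only if, for every maximal Delone cell $P$ of $D$ with lattice vertices $v_0, \ldots, v_k \in \Z^g$, the lifted points $(v_i, v_i^T Q v_i) \in \R^{g+1}$ are coplanar; and, for every pair of adjacent maximal cells sharing a facet, the resulting piecewise-linear lift is strictly convex across that facet. The coplanarity condition is equivalent to requiring that $\sum_i \lambda_i (v_i^T Q v_i) = 0$ for every affine dependence $\sum_i \lambda_i v_i = 0$, $\sum_i \lambda_i = 0$, among the vertices of $P$; since the $v_i$ have integer entries and the map $Q \mapsto v^T Q v$ is integer-linear in the entries of $Q$, these become integer-linear equations on $Q$. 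Similarly, strict convexity across each shared facet translates into a strict integer-linear inequality on $Q$. Although $D$ has infinitely many cells, translation by $\Z^g$ preserves both $D$ and the paraboloid lift up to an affine term, so there are only finitely many $\Z^g$-orbits of cells and of shared facets, and hence only finitely many of the above conditions are needed.

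The main obstacle is the converse direction of the characterization: one must verify that if $Q$ satisfies the finite list of local conditions derived from $D$, then $\Del(Q)$ really equals $D$, rather than some refinement or coarsening. My approach is to define a piecewise-linear function $\phi_Q : \R^g \to \R$ using the prescribed affine lifts on the cells of $D$. The coplanarity conditions ensure that $\phi_Q$ is well-defined and continuous, and by construction $\phi_Q(v) = v^T Q v$ for every $v \in \Z^g$. The local strict-convexity conditions across facets, combined with a standard local-to-global convexity argument, force $\phi_Q$ to be globally convex on $\R^g$. It then follows that $\phi_Q$ agrees with the lower envelope of the lifted lattice $\{(v, v^T Q v) : v \in \Z^g\}$, so projecting its domains of linearity back to $\R^g$ recovers $\Del(Q) = D$, as required.
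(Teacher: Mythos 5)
The paper states this proposition with a citation to Vorono\"i's 1908 memoir and gives no proof of its own, so there is no argument in the text for you to match against; your attempt has to stand on its own. Your outline is the standard and correct proof: $\sigma_D$ is cut out inside the linear space of symmetric matrices by integer-coefficient linear equalities (one for each affine dependence among the vertices of a Delone cell, forcing the lifted cell to be flat) and integer-coefficient strict linear inequalities (one for each wall between adjacent maximal cells, forcing the lift to break strictly across the wall), $\Z^g$-periodicity of $D$ cuts these down to finitely many up to repetition, invariance under $Q\mapsto tQ$ makes the region a cone, and the converse is handled by the usual construction of the piecewise-linear section $\phi_Q$ together with a local-to-global convexity argument forcing $\phi_Q$ to coincide with the lower envelope of the lifted lattice.

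Two details worth attending to if you write this up fully. First, the local-to-global convexity step is usually stated for finite polyhedral complexes or for a compact domain; here the complex is infinite, so you should argue by restricting to a segment (which meets only finitely many cells in any bounded window) or by invoking periodicity. Second, the paper's $\widetilde{S}^g_{\geq 0}$ allows degenerate $Q$ with rational nullspace, and for such $Q$ the Delone cells are unbounded prisms in the nullspace direction; your finiteness-of-orbits claim and the PL construction both continue to work, but only because the nullspace is assumed rational --- this is precisely why the paper restricts to $\widetilde S^g_{\geq 0}$ rather than all of $S^g_{\geq 0}$, and it deserves an explicit remark.
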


\begin{figure}%
\includegraphics[height=3in]{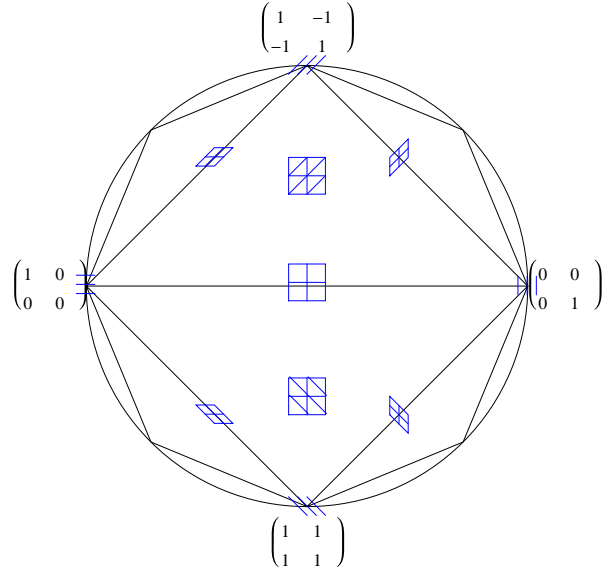}%
\caption{Infinite decomposition of $\widetilde{S}^2_{\geq 0}$ into secondary cones.}%
\label{f:s2}%
\end{figure}

Let $\overline{\sigma_D}$ denote the Euclidean closure of $\sigma_D$ in $\R^{\binom{g+1}{2}}$, so $\overline{\sigma_D}$ is a closed rational polyhedral cone. We call it the {\bf secondary cone} of $D$.

\begin{example}
Figure \ref{f:s2} shows the decomposition of $\widetilde{S}^2_{\geq 0}$ into secondary cones.  Here is how to interpret the picture.  First, points in $\widetilde{S}^2_{\geq 0}$ are $2\times 2$ real symmetric matrices, so let us regard them as points in $\R^3$.  Then $\widetilde{S}^2_{\geq 0}$ is a cone in $\R^3$.  Instead of drawing the cone in $\R^3$, however, we only draw a hyperplane slice of it.  Since it was a cone, our drawing does not lose information.  For example, what looks like a point in the picture, labeled by the matrix {\small \mm{1}{0}{0}{0}}, really is the ray in $\R^3$ passing through the point $(1,0,0)$.
\end{example}

 Now, the action of the group $GL_g(\Z)$ on $\widetilde{S}^g_{\geq 0}$ extends naturally to an action (say, on the right) on subsets of $\widetilde{S}^g_{\geq 0}$.  In fact, given $X \in GL_g(\Z)$ and $D$ a Delone subdivision, 
\[ \sigma_D \cdot X = \sigma_{X^{-1}D} \quad\text{  and  }\quad \overline{\sigma_D} \cdot X = \overline{\sigma_{X^{-1}D}}. \]
So $GL_g(\Z)$ acts on the set $$\{\overline{\sigma_D}:D \text{ is a Delone subdivision of } \R^g\}.$$ Furthermore, $GL_g(\Z)$ acts on the set of Delone subdivisions, with action induced by 
the action of $GL_g(\Z)$ on $\R^g$.
Two cones $\sigma_D$ and $\sigma_{D'}$ are $GL_g(\Z)$-equivalent iff $D$ and $D'$ are.

\begin{theorem}[Main theorem of Voronoi reduction theory \cite{voronoi1908}]
\label{t:main}
The set of secondary cones
\[ \{ \overline{\sigma_D} : D \text{ is a Delone subdivision of } \R^g \} \]
yields an infinite polyhedral fan whose support is $\widetilde{S}^g_{\geq 0}$, known as the {\bf second Voronoi decomposition}. There are only finitely many $GL_g(\Z)$-orbits of this set.
\end{theorem}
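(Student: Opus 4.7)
The plan is to prove two main things, since polyhedrality and rationality of each individual cone $\sigma_D$ are already provided by the preceding proposition: (i) the closed cones $\{\overline{\sigma_D}\}$ assemble into a polyhedral fan covering $\widetilde S^g_{\ge 0}$, and (ii) the action of $GL_g(\Z)$ on this collection has only finitely many orbits.

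For (i), I would first identify the face lattice of $\overline{\sigma_D}$ combinatorially. The defining data of $\sigma_D$ consist of linear equalities forcing the vertices of each maximal cell of $D$ to lift onto a common supporting hyperplane, together with strict linear inequalities forcing every other lattice point to lift strictly above it. Promoting one of the strict inequalities to an equality amounts to merging two adjacent Delone cells along their common facet, producing a coarsening $D'$; the resulting face of $\overline{\sigma_D}$ is exactly $\overline{\sigma_{D'}}$, and every face arises this way. Next I would verify the fan axiom: for any two subdivisions $D$ and $D'$, the intersection $\overline{\sigma_D}\cap\overline{\sigma_{D'}}$ consists of those $Q$ whose Delone subdivision is a common coarsening of both, and I would identify this intersection as the single closed cone $\overline{\sigma_{D''}}$ where $D''$ is the coarsest such common coarsening, which is a face of each. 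Finally, every $Q\in\widetilde S^g_{\ge 0}$ has a well-defined $\Del(Q)$ (the rational nullspace assumption ensures the lifted configuration descends to a locally finite polyhedral lower hull), so the cones $\sigma_D$ disjointly cover $\widetilde S^g_{\ge 0}$.

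The main obstacle is (ii), the finiteness of orbits, which is the substantive content of Voronoi reduction theory. The plan is to reduce to the top-dimensional cones first, namely those $\sigma_D$ where $D$ is a Delone triangulation, since every face of a secondary cone is itself a secondary cone and the face relation is $GL_g(\Z)$-equivariant. Inside the positive definite locus $S^g_{>0}$, I would invoke Minkowski's theorem that $GL_g(\Z)$ admits a fundamental domain of finite volume on $S^g_{>0}$, together with the fact that only finitely many top-dimensional secondary cones meet such a reduced domain (this last fact being equivalent to a uniform bound on the number of Delone triangulations realized by reduced forms). Passing to faces then yields finitely many orbits on the full fan. Because this last step really uses the arithmetic of $GL_g(\Z)$ and standard reduction theory of positive quadratic forms, for which there is no combinatorial shortcut, I would cite Voronoi's original argument \cite{voronoi1908} rather than reprove it from scratch.
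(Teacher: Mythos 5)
The paper does not prove this theorem; it is stated as a classical result and cited to Voronoi \cite{voronoi1908}, with the modern treatment implicitly delegated to Vallentin's thesis \cite{v} (see the discussion just before Definition~\ref{d:ag}, where the finiteness is linked to walking between Delone triangulations by bistellar flips). So there is no internal proof to compare against, and your decision to defer the arithmetic core to a citation is consistent with what the paper itself does. As an outline of the known argument your proposal is broadly reasonable, but note two points. First, a small but genuine slip: since a \emph{coarsening} of $D$ corresponds to a \emph{face} of $\overline{\sigma_D}$, the intersection $\overline{\sigma_D}\cap\overline{\sigma_{D'}}$ should be the secondary cone of the \emph{finest} common coarsening, not the ``coarsest''; the coarsest common coarsening is the trivial subdivision, whose cone is $\{0\}$.

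Second, the step ``only finitely many top-dimensional secondary cones meet a reduced domain'' is not a routine consequence of Minkowski's finite covolume theorem: the secondary fan accumulates near the rank-degenerate boundary of $\widetilde{S}^g_{\ge 0}$ (visible already in Figure~\ref{f:s2}), and Minkowski's domain reaches that boundary, so a bound on volume alone does not control the number of cones met. This is in fact the substantive content of Voronoi's theorem, and a full argument either uses a careful analysis of how the reduction domain sits relative to the secondary fan, or—as in \cite{v}—an algorithmic argument via bistellar flips together with a termination bound. Since you explicitly acknowledge that this is where the real work lies and cite Voronoi for it, the proposal is not wrong, but a reader should understand that the sentence quoted above is the theorem rather than a lemma toward it.
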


\subsection{Construction of $\Atrg$}
Equipped with Theorem \ref{t:main}, we will now construct our tropical moduli space $\Atrg$. We will show that its points are in bijection with the points of $\widetilde{S}^g_{\geq 0} / GL_g(\Z)$, and that it is a stacky fan whose cells correspond to $GL_g(\Z)$-equivalence classes of Delone subdivisions of $\R^g$.

\begin{definition}
  Given a Delone subdivision $D$ of $\R^g$, let
  \[ \Stab(\sigma_D) = \{X \in GL_g(\Z) : \sigma_D \cdot X = \sigma_D \} \]
  be the setwise stabilizer of $\sigma_D$. 
\end{definition}

Now,  the subgroup $\Stab(\sigma_D) \subseteq GL_g(\Z)$ acts on the 
open cone $\sigma_D$, and we may extend this action to an action on its closure $\overline{\sigma_D}$.

\begin{definition}
  Given a Delone subdivision $D$ of $\R^g$, let
  \[ C(D) \,\,= \,\,\overline{\sigma_D}/\Stab(\sigma_D). \]
  Thus, $C(D)$ is the topological space obtained as a quotient of the rational polyhedral cone $\overline{\sigma_D}$ by a group action.
\end{definition}

Now, by Theorem \ref{t:main}, there are only finitely many $GL_g(\Z)$-orbits of secondary cones $\overline{\sigma_D}$. Thus, we may choose $D_1, \dots, D_k$ Delone subdivisions of $\R^g$ such that $\overline{\sigma_{D_1}}, \dots, \overline{\sigma_{D_k}}$ are representatives for $GL_g(\Z)$-equivalence classes of secondary cones. (Note that we do not need anything like the Axiom of Choice to select these representatives. Rather, we can use Algorithm 1 in \cite{v}.  We start with a particular Delone triangulation and then walk across codimension 1 faces to all of the other ones; then we compute the faces of these maximal cones to obtain the nonmaximal ones.  The key idea that allows the algorithm to terminate is that all maximal cones are related to each other by finite sequences of ``bistellar flips'' as described in Section 2.4 of \cite{v}).

\begin{definition} \label{d:ag}
Let $D_1, \dots, D_k$ be Delone subdivisions such that $\overline{\sigma_{D_1}}$, $\dots, \overline{\sigma_{D_k}}$ are representatives for $GL_g(\Z)$-equivalence classes of secondary cones in $\R^g$. Consider the disjoint union
\[ C(D_1) \coprod \cdots \coprod C(D_k), \]
and define an equivalence relation $\sim$ on it as follows. Given $Q_i \in \overline{\sigma(D_i)}$ and $Q_j \in \overline{\sigma(D_j)}$, let $[Q_i]$ and $[Q_j]$ be the corresponding elements in $C(D_i)$ and $C(D_j)$, respectively. Now let
\[ [Q_i] \sim [Q_j] \]
if and only if $Q_i$ and $Q_j$ are $GL_g(\Z)$-equivalent matrices in $\widetilde{S}^g_{\geq 0}$. Since $\Stab(\sigma_{D_i})$, $\Stab(\sigma_{D_j})$ are subgroups of $GL_g(\Z)$, the relation $\sim$ is defined independently of choice of representatives $Q_i$ and $Q_j$, and is clearly an equivalence relation.

We now define the {\bf moduli space of principally polarized tropical abelian varieties}, denoted $\Atrg$, to be the topological space
\[ \Atrg \,\,\,= \,\,\, \coprod\limits^k_{i=1} C(D_k) / \sim. \]
\end{definition}

\begin{example}
\label{e:Atr2}
  Let us compute $A^{\text{tr}}_2$. Combining the taxonomies in Sections 4.1 and 4.2 of \cite{v}, we may choose four representatives $D_1, D_2, D_3, D_4$ for orbits of secondary cones as in Figure \ref{f:subdiv}.

\begin{figure}[h]%
\includegraphics[width=3in]{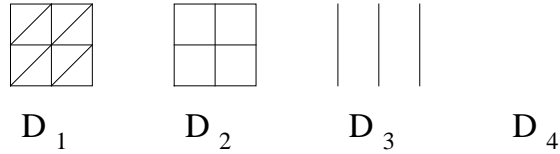}%
\caption{Cells of $A^{\text{tr}}_2$.}%
\label{f:subdiv}%
\end{figure}
We can describe the corresponding secondary cones as follows: let
$R_{12} = \left(\begin{matrix}1&-1\\-1&1\end{matrix}\right),
R_{13} = \left(\begin{matrix}1&0\\0&0\end{matrix}\right),
R_{23} = \left(\begin{matrix}0&0\\0&1\end{matrix}\right).$
Then
\begin{align*}
 \overline{\sigma_{D_1}} &= \R_{\geq 0} \langle R_{12}, R_{13}, R_{23} \rangle, \\
 \overline{\sigma_{D_2}} &= \R_{\geq 0} \langle R_{13}, R_{23} \rangle, \\
 \overline{\sigma_{D_3}} &= \R_{\geq 0} \langle R_{13} \rangle, \text{ and } \\
 \overline{\sigma_{D_4}} &= \{0\}.
\end{align*}

\begin{figure}%
\includegraphics[width=2in]{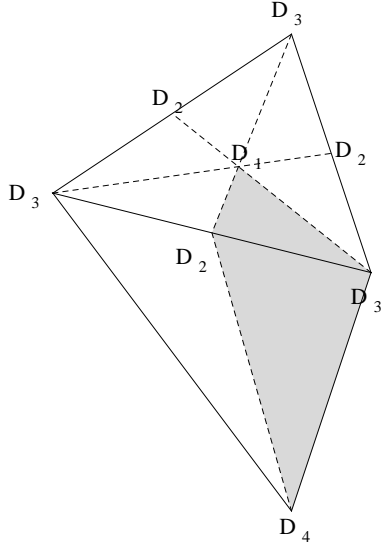}%
\caption{The stacky fan $A^{\text{tr}}_2$.  The shaded area represents a choice of fundamental domain.}%
\label{f:a2_fan}%
\end{figure}

Note that each closed cone $\overline{\sigma_{D_2}}, \overline{\sigma_{D_3}}, \overline{\sigma_{D_4}}$ is just a face of $\overline{\sigma_{D_1}}$. One may check -- and we will, in Section 5 -- that for each $j = 2,3,4,$ two matrices $Q,Q'$ in $\overline{\sigma_{D_j}}$ are $\Stab(\sigma_{D_j})$-equivalent if and only if they are $\Stab(\sigma_{D_1})$-equivalent. Thus, gluing the cones $C(D_2)$, $C(D_3)$, and $C(D_4)$ to $C(D_1)$ does not change $C(D_1)$. We will see in Theorem \ref{t:aut} that the action of $\Stab(\sigma_{D_1})$ on $\overline{\sigma_{D_1}}$ is an $S_3$-action that permutes the three rays of $\overline{\sigma_{D_1}}$. So we may pick a fundamental domain, say the closed cone
\[ C = \R_{\geq 0} \left\langle
\left(\begin{matrix} 0 & 0 \\ 0 & 1 \end{matrix}\right),
\left(\begin{matrix} 1 & 0 \\ 0 & 1 \end{matrix}\right),
\left(\begin{matrix} 2 & -1 \\ -1 & 2 \end{matrix}\right)
\right\rangle,
\]
and conclude that $C(D_1)$, and hence $A^{\text{tr}}_2$, is homeomorphic to $C$. See Figure \ref{f:a2_fan} for a picture of $A^{\text{tr}}_2$.  Of course, $A^{\text{tr}}_2$ has further structure, as the next theorem shows.
\end{example}

\begin{theorem}\label{t:agsf}
  The space $\Atrg$ constructed in Definition \ref{d:ag} is a stacky fan with cells
    $\,  \sigma_{D_i}/\Stab(\sigma_{D_i})\,$ for $\,i=1,\ldots,k$.
\end{theorem}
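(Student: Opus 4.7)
The plan is to verify the four conditions of Definition~\ref{d:sf} in the order (ii), (iii), (iv), (i), mirroring the proof already given for $\Mtrg$. First I set up coordinates. For each $i$, let $m_i = \dim \overline{\sigma_{D_i}}$ and $V_i = \sspan(\overline{\sigma_{D_i}}) \subseteq \R^{\binom{g+1}{2}}$. Since $\overline{\sigma_{D_i}}$ is a rational polyhedral cone, $\Lambda_i := V_i \cap \Z^{\binom{g+1}{2}}$ is a full-rank lattice in $V_i$, and fixing a $\Z$-basis identifies $V_i \cong \R^{m_i}$ and $\Lambda_i \cong \Z^{m_i}$. The action $Q \mapsto X^T Q X$ of $\Stab(\sigma_{D_i})$ is $\Z$-linear and preserves both $V_i$ and $\Lambda_i$, so under this identification $\Stab(\sigma_{D_i})$ becomes a subgroup of $GL_{m_i}(\Z)$ stabilizing the full-dimensional open cone $\sigma_{D_i} \subseteq \R^{m_i}$. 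Let $\alpha_i : \overline{\sigma_{D_i}}/\Stab(\sigma_{D_i}) \to \Atrg$ be the natural map from Definition~\ref{d:ag}, and write $C_i = \alpha_i(\sigma_{D_i}/\Stab(\sigma_{D_i}))$.

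Condition (ii), that $\Atrg = \coprod C_i$, follows directly from Theorem~\ref{t:main}: every point of $\widetilde{S}^g_{\geq 0}$ lies in the relative interior of a unique secondary cone, and the $D_i$ are chosen so that each $GL_g(\Z)$-orbit of such cones contains exactly one $\sigma_{D_i}$. For (iii), any face $F$ of $\overline{\sigma_{D_i}}$ is again a secondary cone $\overline{\sigma_D}$ by Theorem~\ref{t:main}, and by the choice of representatives there exist $X \in GL_g(\Z)$ and an index $l$ with $X^T \!\cdot\! F \!\cdot\! X = \overline{\sigma_{D_l}}$. The map $Q \mapsto X^T Q X$ is $\R$-invertible from $V_D$ to $V_{D_l}$ and sends $\Lambda_D$ isomorphically onto $\Lambda_{D_l}$; composed with the coordinate identifications this provides the required integer-linear isomorphism $L$, and the diagram commutes by definition of $\sim$ in Definition~\ref{d:ag}.

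For (iv), suppose $[Q] \in \overline{C_i} \cap \overline{C_j}$ is represented by some $Q \in \overline{\sigma_{D_i}}$ and some $Q' = X^T Q X \in \overline{\sigma_{D_j}}$ with $X \in GL_g(\Z)$. Then both $\overline{\sigma_{D_i}}$ and $\overline{\sigma_{D_j}} \cdot X^{-1}$ are cones of the second Voronoi fan containing $Q$; by Theorem~\ref{t:main} their intersection is a common face $\overline{\sigma_{D'}}$ of each. Then $D'$ is $GL_g(\Z)$-equivalent to some representative $D_l$, and the point $[Q]$ lies in the common stacky face $\overline{C_l}$ by the map constructed in (iii). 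The reverse inclusion is immediate from the description of stacky faces.

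Condition (i) is the most delicate and is where I expect the main obstacle. Injectivity of $\alpha_i$ on $\sigma_{D_i}/\Stab(\sigma_{D_i})$ is clean: if $Q, Q' \in \sigma_{D_i}$ (relative interior) satisfy $Q' = X^T Q X$, then $\sigma_{D_i} \cdot X^{-1}$ is a relatively open secondary cone containing $Q \in \sigma_{D_i}$, forcing $\sigma_{D_i} \cdot X^{-1} = \sigma_{D_i}$ and so $X \in \Stab(\sigma_{D_i})$. Continuity is automatic from the quotient topology. The real work is showing $\alpha_i$ is a homeomorphism onto its image, for which, following the $\Mtrg$ argument, it suffices to prove that for any closed $\widetilde{W} \subseteq \overline{\sigma_{D_i}}$ and any index $j$, the set
\[ \{Q \in \overline{\sigma_{D_j}} : Q \sim Q' \text{ for some } Q' \in \widetilde{W}\} \]
is closed in $\overline{\sigma_{D_j}}$. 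A priori this is a union indexed by the infinite group $GL_g(\Z)$ of preimages under $\R$-linear maps of the form $Q \mapsto X^T Q X$, so closedness is not obvious. The key ingredient resolving this obstacle is the local finiteness of the second Voronoi fan (Theorem~\ref{t:main}): only finitely many $GL_g(\Z)$-translates of $\overline{\sigma_{D_i}}$ can meet any bounded neighborhood in $\overline{\sigma_{D_j}}$, reducing the union to a locally finite one of closed sets and hence to a closed set. With (i)--(iv) established, the theorem follows.
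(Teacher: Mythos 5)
Your structure matches the paper's: verify (ii), (iii), (iv), and then (i), with (iii) via the $L_A$-isomorphism between faces, (iv) via intersection of cones being a common face, and (i)-injectivity via the disjointness of open cones in the second Voronoi fan. All of that is sound. You also correctly isolate the real difficulty in (i): given a closed set $\widetilde W\subseteq\overline{\sigma_{D_i}}$, one must show that
\[
\widetilde Y_j\ =\ \bigcup_{A\in GL_g(\Z)}\Bigl( A^T\widetilde W A\ \cap\ \overline{\sigma_{D_j}}\Bigr)
\]
is closed even though the union is indexed by an infinite group.

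The gap is in your proposed resolution. You assert that the second Voronoi decomposition is locally finite, so that only finitely many translates $A^T\overline{\sigma_{D_i}}A$ meet a bounded neighborhood in $\overline{\sigma_{D_j}}$. Theorem~\ref{t:main} does not say this, and it is in fact false near the boundary of $\widetilde S^g_{\geq 0}$. Already for $g=2$: put $Q_0=\left(\begin{smallmatrix}1&0\\0&0\end{smallmatrix}\right)$ (a ray of $\overline{\sigma_{D_1}}$ in the notation of Example~\ref{e:Atr2}). For $X_c=\left(\begin{smallmatrix}1&0\\c&1\end{smallmatrix}\right)$ with $c\in\Z$ one has $X_c^TQ_0X_c=Q_0$ while $X_c^TR_{23}X_c=\left(\begin{smallmatrix}c^2&c\\c&1\end{smallmatrix}\right)$, so the cones $X_c^T\overline{\sigma_{D_1}}X_c$ are pairwise distinct for distinct $c$, yet each contains the ray through $Q_0$ and hence meets every bounded neighborhood of $Q_0$ in $\overline{\sigma_{D_1}}$. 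So the union above is genuinely an infinite union of closed sets, and local finiteness cannot be invoked.

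What saves the argument in the paper is not geometric local finiteness but a purely combinatorial finiteness of \emph{maps}: each $A$ with $A^T\overline{\sigma_{D_i}}A\cap\overline{\sigma_{D_j}}\neq\emptyset$ induces an integral-linear isomorphism $L_A\colon F_{A,i}\to F_{A,j}$ between a face of $\overline{\sigma_{D_i}}$ and a face of $\overline{\sigma_{D_j}}$ (because the secondary cones form a fan, two closed cones meet in a common face). By Remark~\ref{r:perm}, $L_A$ carries rays to rays preserving first lattice points, so $L_A$ is determined by the pair of faces $(F_{A,i},F_{A,j})$ and a bijection of their ray sets. There are only finitely many such triples, hence only finitely many distinct maps $L_A$, even though infinitely many $A\in GL_g(\Z)$ may realize each one. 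This reduces $\widetilde Y_j$ to a finite union $\bigcup_{k=1}^{s}L_{A_k}(\widetilde W\cap F_{A_k,i})$ of closed sets, which is closed. You should replace the local-finiteness step with this argument; the same finiteness is also what makes the Hausdorffness proof (Theorem~\ref{t:agh}) go through.
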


\begin{proof}
  For each $i = 1, \dots, k$, let $\alpha_i$ be the composition
  \[ \frac{\overline{\sigma_{D_i}}}{\Stab(\sigma_{D_i})}
  \,\, \overset{\gamma_i}{\hooklongrightarrow} \,\,
  \coprod_{j=1}^k C(D_j) \,\,\overset{q}{\longrightarrow}\,\, \left(\coprod\limits_{j=1}^k C(D_j)\right)/{\sim}, \]
  where $\gamma_i$ is the inclusion of $C(D_i) = \frac{\overline{\sigma_{D_i}}}{\Stab(\sigma_{D_i})}$ into $\coprod_{j=1}^k C(D_j)$ and $q$ is the quotient map. Now we check the four conditions listed in Definition \ref{d:sf} for $\Atrg$ to be a stacky fan. 

  First, we prove that the restriction of $\alpha_i$ to $\frac{\sigma_{D_i}}{\Stab(\sigma_{D_i})}$ is a homeomorphism onto its image. Now, $\alpha_i$ is continuous since both $\gamma_i$ and $q$ are. To show that $\alpha_i|_{\frac{\sigma_{D_i}}{\Stab(\sigma_{D_i})}}$ is one-to-one onto its image, let $Q,Q' \in \sigma_{D_i}$ such that $\alpha_i([Q])=\alpha_i([Q'])$. Then $[Q] \sim [Q']$, so there exists $A \in GL_g(\Z)$ such that $Q'=A^TQA$. Hence $Q' \in A^T\sigma_{D_i}A = \sigma_{A^{-1}D_i}$. Thus $\sigma_{A^{-1}D_i}$ and $\sigma_{D_i}$ intersect, hence $\sigma_{A^{-1}D_i} = \sigma_{D_i}$ and $A \in \Stab(\sigma_{D_i})$. So $[Q] = [Q']$.

  Thus, $\alpha_i|_{\frac{\sigma_{D_i}}{\Stab(\sigma_{D_i})}}$ has a well-defined inverse map, and we wish to show that this inverse map is continuous. Let $X \subseteq \frac{\sigma_{D_i}}{\Stab \sigma_{D_i}}$ be closed; we wish to show that $\alpha_i(X)$ is closed in $\alpha_i\left(\frac{\sigma_{D_i}}{\Stab \sigma_{D_i}}\right)$. Write $X = Y \cap \frac{\sigma_{D_i}}{\Stab \sigma_{D_i}}$ where $Y \subseteq \frac{\overline{\sigma_{D_i}}}{\Stab \sigma_{D_i}}$ is closed.  Then
  \[ \alpha_i(X) = \alpha_i(Y) \cap \alpha_i \left(\frac{\sigma_{D_i}}{\Stab \sigma_{D_i}}\right); \]
  this follows from the fact that  $GL_g(\Z)$-equivalence never identifies a point on the boundary of a closed cone with a point in the relative interior.
  So we need only show that $\alpha_i(Y)$ is closed in $\Atrg$.  To be clear: we want to show that given any closed $Y \subseteq \frac{\overline{\sigma_{D_i}}}{\Stab \sigma_{D_i}}$, the image $\alpha_i(Y) \subseteq \Atrg$ is closed.

  Let $\tilde{Y} \subseteq \overline{\sigma_{D_i}}$ be the preimage of $Y$ under the quotient map
  \[ \overline{\sigma_{D_i}} \twoheadlongrightarrow \frac{\overline{\sigma_{D_i}}}{\Stab \sigma_{D_i}}. \]
  Then, for each $j = 1, \dots, k$, let
  \[ \tilde{Y_j} = \{Q \in \overline{\sigma_{D_j}} : Q \equiv_{GL_g(\Z)} Q' \text{ for some } Q' \in \tilde{Y} \} \subseteq \overline{\sigma_{D_j}}. \]
  We claim each $\tilde{Y_j}$ is closed in $\overline{\sigma_{D_j}}$. First, notice that for any $A \in GL_g(\Z)$, the cone $A^T\overline{\sigma_{D_i}}A$ intersects $\overline{\sigma_{D_j}}$ in a (closed) face of $\overline{\sigma_{D_j}}$ (after all, the cones form a polyhedral subdivision).  In other words, $A$ defines an integral-linear isomorphism $L_A: F_{A,i}\rightarrow F_{A,j}$ sending $X\mapsto A^TXA$, where $F_{A,i}$ is a face of $\overline{\sigma_{D_i}}$ and $F_{A,j}$ is a face of $\overline{\sigma_{D_j}}$.  Moreover,
the map $L_A$ is entirely determined by three choices: the choice of $ F_{A,i}$, the choice of $ F_{A,j}$, and the choice of a bijection between the rays of $F_{A,j}$ and $F_{A,j}$.  Thus there exist only finitely many distinct such maps.  Therefore 
\[
 \tilde{Y_j} = \bigcup_{A\in GL_g(\Z)} L_A(\tilde{Y} \cap F_{A,i})= \bigcup_{k=1}^s L_{A_k}(\tilde{Y} \cap F_{{A_k},i})
\]
for some choice of finitely many matrices $A_1, \dots, A_s \in GL_g(\Z)$.  Now, each $L_A$ is a homeomorphism, so each $L_A(\tilde{Y} \cap F_{A,i})$ is closed in $F_{A,j}$ and hence in $\overline{\sigma_{D_j}}$.  So $\tilde{Y_j}$ is closed.

  Finally, let $Y_j$ be the image of $\tilde{Y_j} \subseteq \overline{\sigma_{D_j}}$ under the quotient map
  \[ \overline{\sigma_{D_j}} \overset{\pi_i}{\twoheadlongrightarrow}  \frac{\overline{\sigma_{D_j}}}{\Stab \sigma_{D_j}}. \]
  Since $\pi_j^{-1}(Y_j) = \tilde{Y_j}$, we have that $Y_j$ is closed.
  Then the inverse image of $\alpha_i(Y)$ under the quotient map
  \[ \coprod_{j=1}^k C(D_j) \longrightarrow \left(\coprod\limits_{j=1}^k C(D_j)\right)/{\sim} \]
  is precisely $Y_1 \coprod \cdots \coprod Y_k$, which is closed. Hence $\alpha_i(Y)$ is closed. This finishes the proof that $\alpha_i|_{\frac{\sigma_{D_i}}{\Stab(\sigma_{D_i})}}$ is a homeomorphism onto its image.

  Property (ii) of being a stacky fan follows from the fact that any matrix $Q \in \widetilde{S}^g_{\geq 0}$ is $GL_g(\Z)$-equivalent only to some matrices in a single chosen cone, say $\sigma_{D_i}$, and no others. Here, $\Del(Q)$ and $D_i$ are $GL_g(\Z)$-equivalent. Thus, given a point in $\Atrg$ represented by $Q \in \widetilde{S}^g_{\geq 0}$, $Q$ lies in $\alpha_i \left(\frac{\sigma_{D_i}}{\Stab \sigma_{D_i}}\right)$ and no other $\alpha_j\left(\frac{\sigma_{D_j}}{\Stab \sigma_{D_j}}\right)$, and is the image of a single point in $\frac{\sigma_{D_i}}{\Stab \sigma_{D_i}}$ since $\alpha_i$ was shown to be bijective on $\frac{\sigma_{D_i}}{\Stab \sigma_{D_i}}$. This shows that $\Atrg = \coprod_{i=1}^k \alpha_i \left(\frac{\sigma_{D_i}}{\Stab \sigma_{D_i}}\right)$ as a set.

  Third, a face $F$ of some cone $\overline{\sigma_{D_i}}$ is $\overline{\sigma_{D(F)}}$, where $D(F)$ is a Delone subdivision that is a coarsening of $D_i$ \cite[Proposition 2.6.1]{v}. Then there exists $D_j$ and $A \in GL_g(\Z)$ with $\overline{\sigma_{D(F)}} \cdot A = \overline{\sigma_{D_j}}$ (recall that $A$ acts on a point $p \in \widetilde{S}^g_{\geq 0}$ by $p \mapsto A^TpA$). Restricting $A$ to the linear span of $\overline{\sigma_{D(F)}}$ gives a linear map
  \[ L_A: \sspan(\overline{\sigma_{D(F)}}) \longrightarrow \sspan (\overline{\sigma_{D_j}}) \]
  with the desired properties. Note, therefore, that $\overline{\sigma_{D_k}}$ is a stacky face of $\overline{\sigma_{D_i}}$ precisely if $D_k$ is $GL_g(\Z)$-equivalent to a coarsening of $D_i$.

  The fourth property then follows: the intersection
  \[ \alpha_i(\overline{\sigma_{D_i}}) \cap \alpha_j(\overline{\sigma_{D_j}}) = \bigcup \alpha_k(\sigma_{D_k}) \]
  where $\sigma_{D_k}$ ranges over all common stacky faces.
\end{proof}

\begin{proposition}
  The construction of $\Atrg$ in Definition \ref{d:ag} does not depend on our choice of $D_1, \dots, D_k$. More precisely, suppose $D_1', \dots, D_k'$ are another choice of representatives such that 
  $D_i'$ and $D_i$ are $GL_g(\Z)$-equivalent for each $i$. Let $A^{\text{tr}\,'}_g$ be the corresponding stacky fan. Then there is an isomorphism
  of stacky fans between $\, \Atrg \,$ and $\,A^{\text{tr}\,'}_g$.
\end{proposition}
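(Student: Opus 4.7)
The plan is to build the isomorphism cell-by-cell using the fact that each pair $(D_i, D_i')$ is linked by some $A_i \in GL_g(\Z)$, and then assemble these local identifications into a global stacky-fan isomorphism. Since the definition of the equivalence $\sim$ on $\coprod C(D_i)$ depends only on $GL_g(\Z)$-equivalence inside $\widetilde{S}^g_{\geq 0}$, both $\Atrg$ and $A^{\text{tr}\,'}_g$ will be recovered as the same quotient of $\widetilde{S}^g_{\geq 0}/GL_g(\Z)$, just presented with different atlases.

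First, for each $i$ I would choose $A_i \in GL_g(\Z)$ with $A_i^T \sigma_{D_i} A_i = \sigma_{D_i'}$; this choice is unique up to left multiplication by $\Stab(\sigma_{D_i})$ and right multiplication by $\Stab(\sigma_{D_i'})$. The map $\varphi_i : \overline{\sigma_{D_i}} \to \overline{\sigma_{D_i'}}$ given by $Q \mapsto A_i^T Q A_i$ is an integral-linear isomorphism of closed rational polyhedral cones, and conjugation gives $\Stab(\sigma_{D_i'}) = A_i^{-1} \Stab(\sigma_{D_i}) A_i$. Hence $\varphi_i$ descends to a homeomorphism $\overline{\varphi_i} : C(D_i) \to C(D_i')$, and any two choices of $A_i$ produce maps that differ by a stabilizer action and therefore agree on the quotient.

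Next, I would define $\Phi : \Atrg \to A^{\text{tr}\,'}_g$ on the level of the disjoint unions by gluing the $\overline{\varphi_i}$. To see $\Phi$ is well defined on the quotient by $\sim$, observe that if $[Q_i] \sim [Q_j]$ in $\coprod C(D_i)$, meaning there is some $B \in GL_g(\Z)$ with $B^T Q_i B = Q_j$, then
\[ (A_j B A_i^{-T})^T (A_i^T Q_i A_i)(A_j B A_i^{-T}) \;=\; A_j^T Q_j A_j, \]
so $\varphi_i(Q_i)$ and $\varphi_j(Q_j)$ are $GL_g(\Z)$-equivalent, i.e.\ $\sim$-equivalent in $A^{\text{tr}\,'}_g$. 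Symmetrically, the reverse construction using $A_i^{-1}$ yields a map in the other direction, and these maps are mutually inverse on each cell, hence globally. Continuity in both directions follows from the argument already used in the proof of Theorem \ref{t:agsf}: a closed set in one cell spreads to a closed set in each overlapping cell because the gluing identifications between faces are integral-linear and finitely many.

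Finally, I would verify that $\Phi$ is a morphism of stacky fans in the sense of Definition \ref{d:sfm}. By construction $\Phi\bigl(\alpha_i(\sigma_{D_i}/\Stab(\sigma_{D_i}))\bigr) = \alpha_i'(\sigma_{D_i'}/\Stab(\sigma_{D_i'}))$, and the required integral-linear map $L : \R^{\binom{g+1}{2}} \to \R^{\binom{g+1}{2}}$ is $Q \mapsto A_i^T Q A_i$, which is integral-linear and invertible over $\Z$, restricts to $\varphi_i$ on $\sigma_{D_i}$, and makes the relevant diagram commute. The same checks apply to $\Phi^{-1}$, giving an isomorphism of stacky fans. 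The only real subtlety is the independence-of-choices step in paragraph two, which is the only place one has to be careful about mixing up the different stabilizers; everything else is formal.
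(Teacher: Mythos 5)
Your argument follows the paper's proof exactly: choose $A_i \in GL_g(\Z)$ with $\sigma_{D_i}\cdot A_i = \sigma_{D_i'}$, map cell-by-cell by $Q\mapsto A_i^TQA_i$, and observe that the maps descend to the quotient and are inverted by the $A_i^{-1}$; the paper compresses all this into a few lines, and you have merely supplied the routine verifications. One small algebraic slip: in the well-definedness check the conjugating matrix should be $A_i^{-1}BA_j$ (so that $(A_i^{-1}BA_j)^T(A_i^TQ_iA_i)(A_i^{-1}BA_j)=A_j^TB^TQ_iBA_j=A_j^TQ_jA_j$), not $A_jBA_i^{-T}$ as written — the conclusion is unaffected, but the displayed identity as stated does not hold.
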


\begin{proof}
  For each $i$, choose $A_i \in GL_g(\Z)$ with
  \[ \sigma_{D_i} \cdot A_i = \sigma_{D_i'}. \]
  Then we obtain a map
  \[ C(D_1) \coprod \cdots \coprod C(D_k) \xrightarrow{(A_1,\dots,A_k)} C(D_1') \coprod \cdots \coprod C(D_k') \]
  descending to a map
  \[ \Atrg \longrightarrow A^{\text{tr}'}_g,\]
  and this map is an isomorphism of stacky fans, as evidenced by the inverse map $A^{\text{tr}'}_g \rightarrow \Atrg$ constructed from the matrices $A_1^{-1}, \dots, A_k^{-1}$.
\end{proof}

\begin{theorem}\label{t:agh}
The moduli space $\Atrg$ is Hausdorff.
\end{theorem}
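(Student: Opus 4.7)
My plan is to apply the following standard criterion: if $\sim$ is a closed equivalence relation on a topological space $X$ and the quotient map $q: X \to X/\sim$ is open, then $X/\sim$ is Hausdorff. I take $X := \coprod_{i=1}^k C(D_i)$ and $\sim$ the equivalence from Definition~\ref{d:ag}, so that $\Atrg = X/\sim$. Verifying closedness of $\sim$ comes first: restricted to any product $C(D_i) \times C(D_j)$, the relation $\sim$ equals the union of graphs of the finitely many continuous maps $\overline{L_A}$ descending from the face identifications $L_A$ realized by $A \in GL_g(\Z)$ between faces of $\overline{\sigma_{D_i}}$ and $\overline{\sigma_{D_j}}$. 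Finiteness of such maps was already established in the proof of Theorem~\ref{t:agsf}; each graph is closed (graph of a continuous map on a closed subset), and a finite union of closed sets is closed.

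The next step is to show $q$ is open, i.e., the $\sim$-saturation of any open set is open. For an open $U \subseteq X$, the saturation $\sim(U) \cap C(D_l)$ equals $\bigcup \overline{L_A}(U \cap F)$ where the union ranges over face identifications $L_A: F \to \overline{\sigma_{D_l}}$ with $F$ a face of some $\overline{\sigma_{D_i}}$. The critical structural fact is that, by stacky fan condition (iii) established for $\Atrg$ in the proof of Theorem~\ref{t:agsf}, each face $F$ of $\overline{\sigma_{D_i}}$ is identified via $L_A$ with the \emph{full} cell $\overline{\sigma_{D_l}}$ of matching dimension, not merely with a lower-dimensional subset of it. Hence each $\overline{L_A}(U \cap F)$ is the image of an open subset of $F$ under a homeomorphism onto $\overline{\sigma_{D_l}}$, and is therefore open in $C(D_l)$. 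Thus $\sim(U)$ is a union of open sets in each cell and is open in $X$.

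The main obstacle to this strategy is precisely that structural point: if face identifications could land in lower-dimensional strata of other cells, the saturation of an open set would leak onto boundary faces and fail to be open, and the quotient-Hausdorff criterion would break. This obstacle is avoided because the representative cones $\overline{\sigma_{D_1}}, \ldots, \overline{\sigma_{D_k}}$ in Definition~\ref{d:ag} form a complete set of $GL_g(\Z)$-orbit representatives for secondary cones, so every face of every $\overline{\sigma_{D_i}}$ is $GL_g(\Z)$-equivalent to one of the full representative cells. Applying the criterion then completes the proof that $\Atrg$ is Hausdorff, in contrast to the naive Euclidean quotient $\widetilde{S}^g_{\geq 0}/GL_g(\Z)$ which, as noted in \S 4.1, is not Hausdorff for $g \geq 2$.
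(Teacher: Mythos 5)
Your plan hinges on the criterion ``closed relation $+$ open quotient map $\Rightarrow$ Hausdorff,'' and the first half (closedness of $\sim$) is fine: upstairs in $\overline{\sigma_{D_i}}\times\overline{\sigma_{D_j}}$ the relation is a finite union of graphs of the maps $L_A\colon F_{A,i}\to F_{A,j}$, hence closed and saturated, and it descends to a closed relation on $C(D_i)\times C(D_j)$. But the second half is false: the quotient map $q\colon \coprod C(D_i)\to\Atrg$ is \emph{not} open, so the criterion does not apply.

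The error is in your description of the saturation. You write $\sim(U)\cap C(D_l)=\bigcup\overline{L_A}(U\cap F)$ with the union over identifications $L_A\colon F\to\overline{\sigma_{D_l}}$ onto the \emph{whole} cone $\overline{\sigma_{D_l}}$, invoking stacky-fan condition (iii). But condition (iii) only tells you that each face $F$ of $\overline{\sigma_{D_i}}$ is isomorphic (as a lattice cone) to some representative $\overline{\sigma_{D_l}}$ of matching dimension; it does not say that this is the only way $GL_g(\Z)$-equivalence relates points of $\overline{\sigma_{D_i}}$ to points of $\overline{\sigma_{D_l}}$. There are also identifications $L_A\colon F_{A,i}\to F_{A,l}$ landing in a \emph{proper} face $F_{A,l}\subsetneq\overline{\sigma_{D_l}}$ — most basically, the inclusion of a boundary face when $\overline{\sigma_{D_l}}$ genuinely appears as a face of $\overline{\sigma_{D_i}}$. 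These are exactly the identifications you acknowledge would break the argument, and they are present. Concretely in $\Atr{2}$ (Example~\ref{e:Atr2}): let $U$ be an open arc in the open ray $\sigma_{D_3}$. Then $q^{-1}(q(U))\cap C(D_1)$ is the image of an arc on a boundary ray of the three-dimensional cone $\overline{\sigma_{D_1}}$, which has empty interior in $C(D_1)$ and is not open. Hence $q$ is not open, and your second step fails. Note also that $\overline{L_A}$ need not descend to a map $C(D_i)\to C(D_l)$ without compatibility with the stabilizer actions, so the phrase ``graphs of $\overline{L_A}$'' at the quotient level is not literally well-formed, though the closedness conclusion survives by a saturation argument as above.

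The paper instead gives a direct separation argument: given $p\neq q$, separate their (finitely many) preimages in each $\overline{\sigma_{D_i}}$ by disjoint opens $U_i,V_i$, set $U=\{x\in\Atrg:\beta_i^{-1}(x)\subseteq U_i\text{ for all }i\}$ and analogously $V$, show $U,V$ are disjoint, and then show $\beta_j^{-1}(U)$ is open in each $\overline{\sigma_{D_j}}$ by writing its complement as a finite union of sets of the form $A_l^T Z_i A_l\cap\overline{\sigma_{D_j}}$ with $Z_i=\overline{\sigma_{D_i}}\setminus U_i$ closed. The finiteness of distinct $GL_g(\Z)$-induced face maps between two fixed cones, which you correctly used in the closedness step, is the same key input — but it is deployed to prove openness of specific saturated sets built from complements of closed sets, not to prove that $q$ is an open map (which it is not).
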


\begin{remark}
Theorem \ref{t:agh} complements the theorem of Caporaso that $\Mtrg$ is Hausdorff \cite[Theorem 5.2]{c}.
\end{remark}

\begin{proof}
Let $\overline{\sigma_{D_1}}, \dots, \overline{\sigma_{D_k}}$ be representatives for $GL_g(\Z)$-classes of secondary cones.  Let us regard $\Atrg$ as a quotient of the cones themselves, rather than the cones modulo their stabilizers, thus
\[ \Atrg =\left(\coprod\limits^k_{i=1} \overline{\sigma_{D_k}}\right)/\sim \]
where $\sim$ denotes $GL_g(\Z)$-equivalence as usual.  Denote by $\beta_i$ the natural maps 
$$\beta_i:\overline{\sigma_{D_i}}\longrightarrow \Atrg.$$
Now suppose $p\ne q \in \Atrg$.
For each $i = 1,\ldots,k$, pick 
disjoint open sets $U_i$ and $V_i$  in $\overline{\sigma_{D_i}}$ 
such that $\beta_i^{-1}(p)\subseteq U_i$ and $\beta_i^{-1}(q)\subseteq V_i$. Let
\begin{align*}
U&:=\{x\in\Atrg~:~\beta_i^{-1}(x)\subseteq U_i\textrm{ for all }i\},\\
V&:=\{x\in\Atrg~:~\beta_i^{-1}(x)\subseteq V_i\textrm{ for all }i\}.
\end{align*}
By construction, we have  $p\in U$ and $q\in V$.  We claim that $U$ and $V$ are disjoint open sets in $\Atrg$.

Suppose $x\in U\cap V$.  Now $\beta_i^{-1}(x)$ is nonempty for some $i$, hence $U_i\cap V_i$ is nonempty, contradiction.  Hence $U$ and $V$ are disjoint.
So we just need to prove that $U$ is open (similarly, $V$ is open).  It suffices to show that for each $j=1,\ldots,k$, the set $\beta_j^{-1}(U)$ is open.  Now,
\begin{align*}
\beta_j^{-1}(U)&=\{y\in \overline{\sigma_{D_j}}~:~\beta_i^{-1}\beta_j(y)\subseteq U_i\textrm{ for all }i\},\\
&=\bigcap_i \{y\in \overline{\sigma_{D_j}}~:~\beta_i^{-1}\beta_j(y)\subseteq U_i\}.
\end{align*}

Write $U_{ij}$ for the sets in the intersection above, so that $\beta_j^{-1}(U)=\bigcap_i U_{ij}$, and let $Z_i = \overline{\sigma_{D_i}}\setminus U_i$.  
Note that $U_{ij}$ consists of those points in $\overline{\sigma_{D_j}}$ that are not $GL_g(\Z)$-equivalent to any point in $Z_i$.
Then, just as in the proof of Theorem \ref{t:agsf}, there exist finitely many matrices $A_1,\ldots,A_s\in GL_g(\Z)$ such that
\begin{align*}
\overline{\sigma_{D_j}}\setminus U_{ij}&=\{y\in \overline{\sigma_{D_j}}~:~y\sim z\textrm{ for some }z\in Z_i\}\\
&= \bigcup_{l=1}^s \left(A_l^T Z_i A_l \cap \overline{\sigma_{D_j}}\right),
\end{align*}
which shows that $\overline{\sigma_{D_j}}\setminus U_{ij}$ is closed.  Thus the $U_{ij}$'s are open and so $\beta_j^{-1}(U)$ is open for each $j$.  Hence $U$ is open, and similarly, $V$ is open.
\end{proof}

\begin{remark}
Actually, we could have done a much more general construction of $\Atrg$.  We made a choice of decomposition of $\widetilde{S}^g_{\geq 0}$: we chose the second Voronoi decomposition, whose cones are secondary cones of Delone subdivisions.  This decomposition has the advantage that it interacts nicely with the Torelli map, as we will see.  But, as rightly pointed out in \cite{bmv}, we could use any decomposition of $\widetilde{S}^g_{\geq 0}$ that is ``$GL_g(\Z)$-admissible.'' This means that it is an infinite polyhedral subdivision of $\widetilde{S}^g_{\geq 0}$ such that $GL_g(\Z)$ permutes its open cones in a finite number of orbits.  See \cite[Section II]{amrt} for the formal definition.  Every result in this section can be restated for a general $GL_g(\Z)$-admissible decomposition: each such decomposition produces a moduli space which is a stacky fan, which is independent of any choice of representatives, and which is Hausdorff.  The proofs are all the same.  In this paper, though, we chose to fix a specific decomposition purely for the sake of concreteness and readability, invoking only what we needed to build up to the definition of the Torelli map.    
\end{remark}

\subsection{The quotient space $\widetilde{S}^g_{\geq 0} / GL_g(\Z)$}
We briefly remark on the construction of $\Atrg$ 
originally proposed in \cite{bmv}. There, the strategy is to start with the quotient space $\widetilde{S}^g_{\geq 0} / GL_g(\Z)$ and then hope to equip it directly with a stacky fan structure. But $\widetilde{S}^g_{\geq 0}/GL_g(\Z)$ is not even Hausdorff, as the following example shows.

\begin{example}
\label{e:cex}
Let $\{X_n\}_{n \geq 1}$ and $\{Y_n\}_{n \geq 1}$ be the sequences of matrices
\[ X_n = \left(\begin{matrix}
  1 & \frac{1}{n} \\ \frac{1}{n} & \frac{1}{n^2}
\end{matrix}\right), \
Y_n = \left(\begin{matrix}
  \frac{1}{n^2} & 0 \\ 0 & 0
\end{matrix}\right)\]
 in $\widetilde{S}^2_{\geq 0}$. Then we have
\[ \{X_n \} \rightarrow \left(\begin{matrix}
  1 & 0 \\ 0 & 0
\end{matrix}\right),\
\{Y_n \} \rightarrow \
 \left(\begin{matrix}
  0 & 0 \\ 0 & 0
\end{matrix}\right). \]
On the other hand, for each $n$, $X_n \equiv_{GL_2(\Z)} Y_n$ even while $\left(\begin{smallmatrix}1&0\\0&0\end{smallmatrix}\right) \not\equiv_{GL_2(\Z)} \left(\begin{smallmatrix}0&0\\0&0\end{smallmatrix}\right)$. This example then descends to non-Hausdorffness in the topological quotient. It can easily be generalized to $g > 2$.
\end{example}

In particular, we disagree with the claim in the proof of Theorem 4.2.4 of \cite{bmv} that the open cones $\sigma_D$, modulo their stabilizers, map homeomorphically onto their image in $\widetilde{S}^g_{\geq 0} / GL_g(\Z)$. Example \ref{e:cex} is a counterexample. However, we emphasize that our construction in Section 4.3 is just a minor modification of the ideas already extant in \cite{bmv}.

\section{Regular matroids and the zonotopal subfan}
In the previous section, we defined the moduli space $\Atrg$ of principally polarized tropical abelian varieties. In this section, we describe a particular stacky subfan of $\Atrg$ whose cells are in correspondence with simple regular matroids of rank at most $g$. This subfan is called the zonotopal subfan and denoted $\Azong$ because its cells correspond to those classes of Delone triangulations which are dual to zonotopes; see \cite[Section 4.4]{bmv}. The zonotopal subfan $\Azong$ is important because,
as we shall see in Section 6, it contains the image of the Torelli map. For $g \geq 4$,
this containment is proper.
 Our main contribution in this section is to characterize the stabilizing subgroups of all zonotopal cells.

We begin by recalling some basic facts about matroids.  A good reference is \cite{o}.
The connection between matroids and the Torelli map seems to have been first
observed by Gerritzen \cite{g}, and our approach here can be seen as an 
continuation of his work in the late 1970s.

\begin{definition}
  A matroid is said to be {\bf simple} if it has no loops and no parallel elements.
\end{definition}

\begin{definition}
  A matroid $M$ is {\bf regular} if it is representable over every field; equivalently, $M$ is regular if it is representable over $\R$ by a totally unimodular matrix. (A totally unimodular matrix is a matrix such that every square submatrix has determinant in $\{0,1,-1\}$.)
\end{definition}

Next, we review the correspondence between simple regular matroids and zonotopal cells. 

\begin{construction}\label{c:5}
Let $M$ be a simple regular matroid of rank at most $g$, and let $A$ be a $g \times n$ totally unimodular matrix that represents $M$. Let $v_1, \dots, v_n$ be the columns of $A$. Then let $\sigma_A \subseteq \R^{\binom{g+1}{2}}$ be the rational open polyhedral cone
\[ \R_{>0} \langle v_1v_1^T,\dots,v_nv_n^T\rangle. \]
\end{construction}

\begin{example}
Here is an example of Construction \ref{c:5} at work.
  Let $M$ be the uniform matroid $U_{2,3}$; equivalently $M$ is the graphic matroid $M(K_3)$. Then the $2 \times 3$ totally unimodular matrix
  \[ A = \left( \begin{matrix} 1 & 0 & 1 \\ 0 & 1 & -1 \end{matrix}\right) \]
  represents $M$, and $\sigma_A$ is the open cone generated by matrices
  \[ \left(\begin{matrix}1&0\\0&0\end{matrix}\right),
  \left(\begin{matrix}0&0\\0&1\end{matrix}\right),
  \left(\begin{matrix}1&-1\\-1&1\end{matrix}\right). \]
  It is the cone $\sigma_{D_1}$ in Example \ref{e:Atr2} and is shown in
  Figure \ref{f:a2_fan}.
\end{example}

\begin{proposition}\label{p:zon}
\cite[Lemma 4.4.3, Theorem 4.4.4]{bmv}
The cone $\sigma_A$ is a secondary cone in $\widetilde{S}^g_{\geq 0}$. Choosing a different totally unimodular matrix $A'$ to represent $M$ produces a cone $\sigma_{A'}$ that is $GL_g(\Z)$-equivalent to $\sigma_A$. Thus, we may associate to $M$ a unique cell of $\Atrg$, denoted $C(M)$.
\end{proposition}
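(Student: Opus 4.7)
The plan is to construct a Delone subdivision $D_A$ such that $\sigma_A = \sigma_{D_A}$, and then to verify that different totally unimodular representations of $M$ give $GL_g(\Z)$-equivalent cones, so that the resulting cell $C(M)$ of $\Atrg$ is well-defined.

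For the first task, fix $t=(t_1,\ldots,t_n)\in\R_{>0}^n$ and set $Q_t=\sum_i t_i v_i v_i^T$. I would first verify $Q_t\in\widetilde{S}^g_{\geq 0}$: it is a positive combination of rank-one PSD matrices, hence PSD, and $\ker Q_t$ is the orthogonal complement of $\sspan\langle v_1,\ldots,v_n\rangle$, which is rational because the $v_i$ are integer vectors. So $\sigma_A\subseteq\widetilde{S}^g_{\geq 0}$. Next, I would show that $\Del(Q_t)$ is independent of $t$ and depends only on the matroid $M$. The key input is the classical fact that for a totally unimodular matrix, the lower convex hull in the definition of $\Del(Q_t)$ has a combinatorial structure dual to the zonotopal hyperplane arrangement of the $v_i$, and this structure is controlled by the sign patterns of linear functionals on the columns — i.e., by the oriented matroid of $A$, which for a regular matroid is intrinsic to $M$ up to reorientation. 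This yields a well-defined Delone subdivision $D_A$ and the inclusion $\sigma_A\subseteq\sigma_{D_A}$. For the reverse inclusion, I would use that the edge vectors of $D_A$ are exactly the $\pm v_i$ and invoke Voronoi--Delone duality to recover any $Q'\in\sigma_{D_A}$ as a positive combination $\sum t_i v_i v_i^T$.

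For the second claim, I would appeal to the standard fact (Tutte/Truemper) that any two totally unimodular $g\times n$ representations of the same simple regular matroid $M$ are related by a finite sequence of (i) column permutations, (ii) column sign flips, and (iii) left multiplication by some $X\in GL_g(\Z)$. Operations (i) and (ii) leave the generating set $\{v_i v_i^T\}$ literally unchanged, since $(-v_i)(-v_i)^T=v_iv_i^T$, and therefore leave $\sigma_A$ fixed. For (iii), if $A'=XA$ then the columns of $A'$ are $Xv_i$, so $\sigma_{A'}=\R_{>0}\langle(Xv_i)(Xv_i)^T\rangle$; under the right action $Q\cdot Y=Y^TQY$ on $\widetilde{S}^g_{\geq 0}$, taking $Y=X^T\in GL_g(\Z)$ gives $\sigma_A\cdot Y=\sigma_{A'}$. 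Thus $\sigma_A$ and $\sigma_{A'}$ lie in the same $GL_g(\Z)$-orbit of secondary cones, and by the construction of $\Atrg$ in Definition \ref{d:ag} they descend to the same cell $C(M)$.

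The main obstacle is the zonotopal-tiling assertion in the second paragraph: showing that $\Del(Q_t)$ is constant on $\sigma_A$ and coincides with the zonotopal tiling combinatorially determined by $M$. This is the substantive combinatorial input and relies essentially on total unimodularity, which guarantees that the relevant lower faces of the lifted convex hull project to translates of a fixed zonotope independent of the positive weights $t_i$. Granted this, both containments $\sigma_A\subseteq\sigma_{D_A}$ and $\sigma_{D_A}\subseteq\sigma_A$ follow, and the $GL_g(\Z)$-equivalence step is then formal.
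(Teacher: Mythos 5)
The paper does not prove this proposition itself; it is cited directly from \cite{bmv}, so there is no in-paper argument to compare against. Still, your proposal aligns closely with what the paper does prove nearby. Your second step -- that different totally unimodular representations $A,A'$ of $M$ yield $GL_g(\Z)$-equivalent cones -- is exactly the content of Lemma~\ref{l:unimodular}, which the paper derives from Truemper's support lemma (Lemma~\ref{l:truemper}). Your appeal to ``Tutte/Truemper'' is the right instinct, but be aware that the clean statement ``two TU representations of $M$ differ by column permutations, column sign flips, and left $GL_g(\Z)$'' is not quite off-the-shelf: it is itself a consequence of the reduction-to-identical-support argument carried out in the proof of Lemma~\ref{l:unimodular}, so you are implicitly reproving that lemma. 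The formal verification that $A'=XA$ gives $\sigma_{A'}=\sigma_A\cdot X^T$, and that column permutations and sign flips fix $\sigma_A$ on the nose since $(-v_i)(-v_i)^T=v_iv_i^T$, is correct.

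Your first step, that $\sigma_A$ is a secondary cone, is where the real work of \cite[Lemma 4.4.3]{bmv} lives, and your sketch is pointed in the right direction but compressed at exactly the point that matters. Showing that $\Del(Q_t)$ is constant for $t\in\R_{>0}^n$ -- the lattice-dicing statement, depending only on the (reorientation class of the) oriented matroid of $A$ -- yields only the inclusion $\sigma_A\subseteq\sigma_{D_A}$. You rightly flag the reverse inclusion $\sigma_{D_A}\subseteq\sigma_A$ as the substantive step, but the proposed resolution (``edge vectors of $D_A$ are $\pm v_i$, invoke Voronoi--Delone duality'') is not yet an argument: one must actually show that every $Q$ inducing the dicing $D_A$ lies in the open cone $\R_{>0}\langle v_iv_i^T\rangle$, not merely that it has the right combinatorics, and this is genuinely the content of Erdahl's theorem on dicings and their secondary cones. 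Once that classical input is in place both containments close and the rest is formal, so the sketch is sound in outline, but that step cannot be waved through.
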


\begin{definition}
  The {\bf zonotopal subfan} $\Azong$ is the union of cells
  \[ \{C(M):M \text{ a simple regular matroid of rank} \leq g \}
\quad \text{   in $\Atrg$.}\]
\end{definition}

We briefly recall the definition of the Voronoi polytope of a quadratic form in $\widetilde{S}^g_{\geq 0}$, just in order to explain the relationship with zonotopes.
\begin{definition}
  Let $Q \in \widetilde{S}^g_{\geq 0}$, and let $H = (\ker Q)^\bot \subseteq \R^g$. Then
  \[ \Vor(Q) = \{x \in H: x^TQx \leq (x-\lambda)^TQ(x-\lambda)\ \forall \lambda \in \Z^g\} \]
  is a polytope in $H\subseteq \R^g$, called the {\bf Voronoi polytope} of $Q$.
\end{definition}

\begin{theorem}\cite[Theorem 4.4.4, Definition 4.4.5]{bmv}
The zonotopal subfan
 $\Azong$ is a stacky subfan of $\Atrg$.  It consists of those points of 
 the tropical moduli space $\Atrg$ whose Voronoi polytope is a zonotope.
\end{theorem}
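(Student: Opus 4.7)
The plan is to verify, in order, (a) that the collection $\{C(M)\}$ indexed by simple regular matroids $M$ of rank $\leq g$ forms a stacky subfan of $\Atrg$, and (b) that a point of $\Atrg$ lies in $\Azong$ exactly when its Voronoi polytope is a zonotope. For (a), the central task is to show closure under taking stacky faces, since Proposition \ref{p:zon} already guarantees that each $C(M)$ is a well-defined cell of $\Atrg$, and condition (iv) of Definition \ref{d:sf} will then be inherited from $\Atrg$ once face-closure is established.

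First, I would analyze the face structure of a zonotopal cone $\sigma_A = \cone(v_1 v_1^T, \ldots, v_n v_n^T)$, where $A = (v_1 \mid \cdots \mid v_n)$ is totally unimodular and represents a simple regular matroid $M$. Since the extreme rays of any finitely generated cone lie among its generators, every face $F$ of $\sigma_A$ can be written as $F = \cone(v_i v_i^T : i \in S)$ for some subset $S \subseteq \{1,\ldots,n\}$, after discarding redundant generators. The submatrix $A_S = (v_i)_{i \in S}$ is again totally unimodular, hence represents the restriction matroid $M|_S$, which remains simple and regular. Therefore $F = \sigma_{A_S}$ is itself a zonotopal cone, and by Theorem \ref{t:agsf} the stacky faces of $C(M)$ inside $\Atrg$ are precisely the cells $C(M|_S)$, all of which lie in $\Azong$. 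Closure of $\Azong$ as a subspace of $\Atrg$ then follows formally, since the closure of each $C(M)$ is itself a union of such cells.

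For part (b), I would appeal to the classical correspondence between zonotopal Delone tilings and totally unimodular matrices. Explicitly, if $Q = \sum_i a_i v_i v_i^T \in \overline{\sigma_A}$, a direct computation identifies $\Vor(Q)$ (inside $(\ker Q)^\perp$) with the Minkowski sum of the line segments $[-\tfrac{1}{2}a_i v_i, \tfrac{1}{2}a_i v_i]$, which is by definition a zonotope. Conversely, a classical theorem of Erdahl (as referenced in \cite{bmv}) asserts that a zonotopal Voronoi polytope forces $Q$ to lie in $\overline{\sigma_A}$ for some totally unimodular $A$. Since the property of being a zonotope is invariant under the $GL_g(\Z)$-action, this characterization descends cleanly to $\Atrg$.

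The hard part will be the precise face-structure analysis in the second step: I need to ensure that the cone $\sigma_A$ has no extra faces beyond those of the form $\sigma_{A_S}$. Concretely, this requires showing that whenever $\sum_{i \in S} a_i v_i v_i^T$ lies in the relative boundary of $\cone(v_i v_i^T : i \in S)$, some $a_i$ must vanish, which amounts to a linear independence statement for rank-one symmetric matrices built from a totally unimodular representation. The simplicity and regularity hypotheses on $M$ are precisely what allow this step to go through, via the circuit structure of regular matroids.
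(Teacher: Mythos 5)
This theorem is stated in the paper as a citation to \cite[Theorem 4.4.4, Definition 4.4.5]{bmv}; the paper itself gives no proof, so there is no internal argument to compare your attempt against. Evaluating your proposal on its own terms: the overall strategy — verify stacky-face-closure using the matroid restriction $M|_S$, inherit condition (iv) formally, and handle the zonotope characterization via the classical correspondence between zonotopal Delone tilings and totally unimodular representations — is sound and is essentially the route taken in \cite{bmv}.

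Your concluding paragraph, however, misidentifies where the work lies. You worry that $\sigma_A$ might have ``extra faces beyond those of the form $\sigma_{A_S}$,'' and reduce this to a linear-independence statement about the rank-one matrices $v_iv_i^T$. But for any finitely generated cone $\cone(u_1,\dots,u_n)$, every face $F$ satisfies $F=\cone(u_i : u_i\in F)$ — this is elementary convex geometry and does not depend on the generators being linearly independent. So the direction you need (every face of $\overline{\sigma_A}$ is some $\overline{\sigma_{A_S}}$) is automatic; no circuit-theoretic independence argument is required. What linear independence of the $v_iv_i^T$ (which is part of \cite[Lemma 4.4.3]{bmv}, already invoked via Proposition \ref{p:zon}) actually buys you is the converse — that every subset $S$ cuts out a face, i.e., that $\overline{\sigma_A}$ is simplicial — but you do not need that converse for closure of $\Azong$ under stacky faces. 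The genuine content you do rely on, and correctly so, is that $\sigma_{A_S}$ is again a secondary cone (Proposition \ref{p:zon} applied to $A_S$), which then forces the set-theoretic face $\overline{\sigma_{A_S}}$ to agree with a cone of the fan; since the second Voronoi decomposition is a fan, that agreement is automatic once both are cones of it. Two small points on part (b): the Minkowski-sum coefficients in your formula for $\Vor(Q)$ need care (a normalization and a projection to $(\ker Q)^\perp$ are involved), and for the converse direction an explicit reference to the Erdahl--Ryshkov or McMullen characterization of zonotopal Voronoi cells would be appropriate, since that is genuinely the nontrivial classical input.
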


\begin{remark}
\label{r:perm}
Suppose $\sigma$ is an open rational polyhedral cone in $\R^n$. Then any $A \in GL_n(\Z)$ such that $A \sigma = \sigma$ must permute the rays of $\overline{\sigma}$, since the action of $A$ on $\overline{\sigma}$ is linear. Furthermore, it sends a first lattice point on a ray to another first lattice point; that is, it preserves lattice lengths. 
Thus, the subgroup $\Stab(\sigma) \subseteq GL_n(\Z)$ realizes some subgroup of the permutation group on the rays of $\overline{\sigma}$ (although if $\sigma$ is not full-dimensional then the action of $\Stab(\sigma)$ on its rays may not be faithful).
\end{remark}

Now, given a simple regular matroid $M$ of rank $\leq g$, we have almost computed the cell of $\Atrg$ to which it corresponds. Specifically, we have computed the cone $\overline{\sigma_A}$ for $A$ a matrix representing $M$, in Construction \ref{c:5}.  The remaining task is to compute the action of the stabilizer $\Stab(\sigma_A)$.

Note that $\overline{\sigma_A}$ has rays corresponding to the columns of $A$: a column vector $v_i$ corresponds to the ray generated by the symmetric 
rank $1$ matrix $v_iv_i^T$. In light of Remark \ref{r:perm}, we might conjecture that the permutations of rays of $\overline{\sigma_A}$ coming from the stabilizer are the ones that respect the matroid $M$, i.e.~come from matroid automorphisms. That is precisely the case and
provides valuable local information about~$\Atrg$.

\begin{theorem}
\label{t:aut}
Let $A$ be a $g \times n$ totally unimodular matrix representing the simple regular matroid $M$. 
Let $H$ denote the group of permutations of the rays of $\sigma_A$ which
are realized by the action of $Stab(\sigma_A)$.  Then $$H \cong\Aut(M).$$
\end{theorem}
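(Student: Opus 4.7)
The $n$ rays of $\sigma_A$ correspond to the $n$ columns $v_1, \ldots, v_n$ of $A$, each contributing a primitive generator $v_i v_i^T$ in the lattice $\Z^{\binom{g+1}{2}}$ of symmetric integer matrices: the entries of $v_i$ lie in $\{0, \pm 1\}$ so $v_i$ is primitive, and the rank-one matrix $v_i v_i^T$ cannot be written as a positive combination of the other $v_j v_j^T$'s without $v_i$ being parallel to some $v_j$, which is impossible since $M$ is simple. Any $X \in \Stab(\sigma_A)$ acts on this lattice by $M \mapsto X^T M X$, preserving primitivity and hence permuting the rays; this gives a homomorphism $\phi \colon \Stab(\sigma_A) \to S_n$ with image $H$ by definition. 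For the inclusion $H \subseteq \Aut(M)$: if $\phi(X) = \pi$, then $X^T v_i v_i^T X = v_{\pi(i)} v_{\pi(i)}^T$ forces $X^T v_i = \epsilon_i v_{\pi(i)}$ for some signs $\epsilon_i \in \{\pm 1\}$; since $X^T$ is a linear isomorphism and sign changes do not affect linear dependence, $\pi$ preserves linear (in)dependencies among columns and so lies in $\Aut(M)$.

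For the reverse inclusion, given $\pi \in \Aut(M)$, the plan is to construct $X \in \Stab(\sigma_A)$ with $\phi(X) = \pi$. Let $A_\pi$ be $A$ with columns reordered by $\pi$; it is still totally unimodular, represents $M$, and has the same column span $V \subseteq \R^g$ and column lattice $L \subseteq \Z^g$ as $A$. Choosing a basis of $M$ among the columns and pivoting on it (an operation known to preserve total unimodularity) produces $r \times n$ rank-$r$ totally unimodular representations of $M$ from both $A$ and $A_\pi$, where $r = \rank M$. The classical uniqueness theorem for totally unimodular representations of a regular matroid, due to Brylawski--Lucas and Truemper, supplies $Y \in GL_r(\Z)$ and a $\pm 1$ diagonal $D$ relating them, and this translates into a $\Z$-linear automorphism $\tilde Y$ of $L$ satisfying $\tilde Y(v_i) = \epsilon_i v_{\pi(i)}$ for appropriate signs $\epsilon_i$.

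It remains to extend $\tilde Y$ to an element $X^T \in GL_g(\Z)$. The key lattice fact is that $L$ is saturated, i.e.\ $L = V \cap \Z^g$: any basis of $M$ among the columns yields a $g \times r$ submatrix of $A$ whose $r \times r$ minors all lie in $\{0, \pm 1\}$ with at least one equal to $\pm 1$, so these columns already $\Z$-generate the primitive sublattice $V \cap \Z^g$. Consequently $\Z^g / L$ is torsion-free and $\Z^g = L \oplus L'$ for some complement $L'$, so extending $\tilde Y$ by the identity on $L'$ produces $X^T \in GL_g(\Z)$ with $X \in \Stab(\sigma_A)$ and $\phi(X) = \pi$. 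The main obstacle is marshalling the uniqueness theorem for totally unimodular representations in exactly the right form and executing the lattice-theoretic extension cleanly; both are standard, but must be handled with care because $\sigma_A$ need not be full-dimensional in $\R^{\binom{g+1}{2}}$ when $\rank M < g$, and the stabilizer acts on the whole of $\R^{\binom{g+1}{2}}$ rather than merely on $\sspan \sigma_A$.
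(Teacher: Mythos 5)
Your proof is correct and follows essentially the same route as the paper: the forward inclusion ($H\subseteq\Aut(M)$) is identical, and the reverse inclusion rests on the same key input, namely the uniqueness of totally unimodular representations of a regular matroid up to $GL(\Z)$-action and column sign changes (the paper's Lemma~\ref{l:unimodular}, which it derives from Truemper's same-support lemma~\ref{l:truemper}; you invoke the Brylawski--Lucas/Truemper theorem directly as a black box). The one place you diverge is how the rank-deficient case $\rank M < g$ is handled: you reduce to an $r\times n$ representation and then explicitly extend the resulting $GL_r(\Z)$ map to $GL_g(\Z)$ by observing that total unimodularity forces the column lattice to be a saturated (hence direct summand) sublattice of $\Z^g$, whereas the paper keeps everything at the $g\times n$ level and absorbs the extension into its normalization of both matrices to the block form with $\mathrm{Id}_{r\times r}$ in the upper left. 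Your version of this step is arguably more transparent --- the paper's brief justification (``totally unimodular matrices are closed under multiplication'') is stated more generally than is true, though the specific normalizations it performs are fine --- so the extra care you took here is not wasted, even if the underlying argument is the same.
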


\begin{remark}
This statement seems to have been known to Gerritzen in \cite{g}, but we present a new proof here, one which might be easier to read. Our main tool is the combinatorics of unimodular matrices.
\end{remark}

Here is a nice fact about totally unimodular matrices: they are essentially determined by the placement of their zeroes.

\begin{lemma}
\label{l:truemper}
\cite[Lemma 9.2.6]{truemper}
Suppose $A$ and $B$ are $g \times n$ totally unimodular matrices with the same support, i.e. $a_{ij} \neq 0$ if and only if $b_{ij} \neq 0$ for all $i, j$. Then $A$ can be transformed into $B$ by negating rows and negating columns.
\end{lemma}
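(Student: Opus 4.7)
The plan is to translate the statement into the language of signed bipartite graphs. Let $S$ denote the common support of $A$ and $B$, and let $G_S$ be the bipartite graph with row-vertices $R_1,\ldots,R_g$ and column-vertices $C_1,\ldots,C_n$, having an edge $R_iC_j$ precisely when $(i,j)\in S$. Row and column negations of a TU matrix correspond exactly to the standard ``switching'' action on $\pm 1$-edge-signings of $G_S$ (flip all signs at a single vertex). The conclusion sought is that the entry-wise ratio $(i,j)\mapsto A_{ij}/B_{ij}$, viewed as a $\pm 1$-signing of $E(G_S)$, is a coboundary: $A_{ij}/B_{ij}=r_ic_j$ for some row-signs $r_i$ and column-signs $c_j$. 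A standard exact-sequence argument with $\Z/2$-cochains (or a spanning-tree construction of $r$ and $c$) shows that this factorization exists if and only if $A_{ij}/B_{ij}$ has trivial product around every cycle of $G_S$---equivalently, if and only if $\prod_{(i,j)\in C}A_{ij}=\prod_{(i,j)\in C}B_{ij}$ for every cycle $C$.

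Since the cycle space of any graph is generated over $\Z/2$ by its induced cycles (a short induction on the number of chords: a non-induced cycle splits across any chord into two strictly shorter cycles), and since cycle products are multiplicative under symmetric difference (shared edges contribute $\pm 1$ squared, equal to $+1$), it suffices to prove cycle-product equality on induced cycles. So fix an induced $2k$-cycle $C=R_{i_1}C_{j_1}R_{i_2}C_{j_2}\cdots R_{i_k}C_{j_k}R_{i_1}$ and let $T$ stand for either $A$ or $B$. Consider the $k\times k$ submatrix $T'$ on rows $\{i_1,\ldots,i_k\}$ and columns $\{j_1,\ldots,j_k\}$. Because $C$ is induced, the only nonzero entries of $T'$ lie on the cycle, so the bipartite subgraph on these $2k$ vertices admits exactly two perfect matchings---the ``identity'' and the ``cyclic shift''. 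Expanding the determinant,
\[\det T' \,=\, \prod_{l=1}^{k} T'_{l,l} \;+\; (-1)^{k-1}\prod_{l=1}^{k} T'_{l+1,l}\]
(indices mod $k$). Each product is $\pm 1$, so the sum lies in $\{-2,0,2\}$; total unimodularity forces $\det T'=0$, and rearranging gives $\prod_{l}T'_{l,l}\cdot\prod_{l}T'_{l+1,l}=(-1)^k$. This cycle product depends only on $k$, not on whether $T=A$ or $T=B$; the two cycle products therefore agree around every induced cycle, and hence around every cycle.

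The hard part is plausibly the determinant computation: one must see that the sign $(-1)^{k-1}$ (coming from the parity of a $k$-cycle permutation) combines with the TU-enforced vanishing $\det T'=0$ to pin the cycle product down to the intrinsic invariant $(-1)^k$. Everything else is routine graph theory and a standard signed-cochain argument. Once equality of cycle products is established, the signed-graph dictionary delivers the factorization $A_{ij}/B_{ij}=r_ic_j$, which is exactly the statement that $A$ is obtained from $B$ by negating rows (by $r_i$) and negating columns (by $c_j$).
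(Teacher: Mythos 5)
The paper does not actually prove this lemma: it is quoted verbatim from Truemper's book \cite[Lemma 9.2.6]{truemper} and used as a black box, so there is no in-paper proof to compare against. Your argument, read on its own, is correct. The translation to $\pm1$-edge-signings of the support bipartite graph $G_S$, the observation that row/column negations are vertex switchings, and the reduction to ``the map $Z\mapsto\prod_{e\in Z}A_e/B_e$ is trivial on the cycle space, which is generated by induced (chordless) cycles'' are all sound. The heart of the matter is handled correctly: for a chordless $2k$-cycle the corresponding $k\times k$ submatrix has exactly two nonzero diagonals, its determinant expansion has exactly two terms, each $\pm1$, so total unimodularity forces cancellation and hence $\prod_{(i,j)\in C}T_{ij}=(-1)^k$, an invariant independent of the particular TU signing. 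Passing from equality of cycle products to the $r_ic_j$-factorization is the standard $\Z/2$ cochain (or spanning-tree) argument you sketch, and it works component-by-component on $G_S$, with isolated rows/columns being harmless. This is essentially Camion's uniqueness-of-TU-signing theorem, which is the content of Truemper's lemma; your proof is a clean and complete version of the standard argument for it.
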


\begin{lemma}
\label{l:unimodular}
Let $A$ and $B$ be $g \times n$ totally unimodular matrices, with column vectors $v_1, \dots, v_n$ and $w_1, \dots, w_n$ respectively. Suppose that the map $v_i \mapsto w_i$ induces an isomorphism of matroids $M[A] \overset{\cong}{\longrightarrow} M[B]$, i.e. takes independent sets to independent sets and dependent sets to dependent sets. Then there exists $X \in GL_g(\Z)$ such that
\[ Xv_i = \pm w_i, \text{ for each } i = 1,\dots, n. \]
\end{lemma}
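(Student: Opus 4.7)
The plan is to reduce both matrices to a common combinatorial normal form and then invoke Lemma~\ref{l:truemper}.

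Fix a basis $\{i_1,\ldots,i_r\}\subseteq\{1,\ldots,n\}$ of the common matroid $M = M[A] = M[B]$, where $r = \rank M$. Since the columns $v_{i_1},\ldots,v_{i_r}$ are linearly independent in the TU matrix $A$, we can select $r$ rows of $A$ so that the resulting $r \times r$ submatrix $P$ is nonsingular; then $\det P = \pm 1$ by total unimodularity, so $P^{-1}$ is integer. A key consequence is that $v_{i_1}, \ldots, v_{i_r}$ form a $\Z$-basis of the pure sublattice $V_A \cap \Z^g$, where $V_A \subseteq \R^g$ is the column span of $A$: for any $v = \sum_j c_j v_{i_j} \in V_A \cap \Z^g$, restricting to the chosen rows and applying $P^{-1}$ shows that the $c_j$ are integers. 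Similarly $w_{i_1}, \ldots, w_{i_r}$ is a $\Z$-basis of $V_B \cap \Z^g$. Since both sublattices are pure in $\Z^g$, they extend to $\Z$-bases $(v_{i_1},\ldots,v_{i_r}, u_1,\ldots, u_{g-r})$ and $(w_{i_1},\ldots,w_{i_r}, u_1', \ldots, u_{g-r}')$ of $\Z^g$.

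Next, form the reduced standard representations $A_r, B_r$: these are the $r \times n$ matrices whose $i$-th column is the coordinate vector of $v_i$ (resp.\ $w_i$) in the basis $\{v_{i_j}\}$ (resp.\ $\{w_{i_j}\}$). Equivalently, $A_r$ and $B_r$ can be obtained by pivoting on a nonsingular basis submatrix, and by the standard fact that such pivots preserve total unimodularity they are both TU. Both contain $I_r$ on the columns indexed by $\{i_1, \ldots, i_r\}$, and their supports coincide, being determined purely by the fundamental-circuit structure of $M$ relative to the chosen basis. Lemma~\ref{l:truemper} therefore produces $\pm 1$ diagonal matrices $D$ (of size $r \times r$) and $E$ (of size $n \times n$) satisfying $D A_r E = B_r$.

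Define $X \in GL_g(\Z)$ by $X v_{i_j} = D_{jj}\, w_{i_j}$ for each $j$ and $X u_k = u_k'$ for each $k$. A direct computation, using $v_i = \sum_j (A_r)_{j, i}\, v_{i_j}$, $w_i = \sum_j (B_r)_{j, i}\, w_{i_j}$, and $D A_r = B_r E^{-1} = B_r E$, shows that $X v_i = E_{ii}\, w_i = \pm w_i$ for every $i$, as required. The main obstacle is the lattice-theoretic assertion that basis columns of a TU matrix generate $V_A \cap \Z^g$ as a $\Z$-module; this is where total unimodularity enters essentially, through the $\pm 1$ determinant of a nonsingular basis submatrix. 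Once this step is in hand, the rest of the proof is a clean combination of the pivot-preserves-TU fact with Truemper's Lemma~\ref{l:truemper}.
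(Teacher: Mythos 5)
Your argument is correct and follows essentially the same strategy as the paper's proof: reduce both matrices to a canonical form indexed by a common matroid basis (with an identity block and support determined by the fundamental circuits), observe that total unimodularity is preserved, and invoke Lemma~\ref{l:truemper} to conclude that the two canonical forms differ only by row and column sign changes. The difference is bookkeeping: the paper puts $A$ and $B$ in the $g\times n$ block form $\left(\begin{smallmatrix}I_r & * \\ 0 & 0\end{smallmatrix}\right)$ via left multiplication by $GL_g(\Z)$, while you pass to the $r\times n$ reduced representations $A_r, B_r$ and then reconstruct $X$ explicitly on a $\Z$-basis; both hinge on the same fact that a nonsingular basis submatrix of a TU matrix has determinant $\pm 1$ and hence integer inverse. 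Two small remarks in your favor: you make explicit the lattice-theoretic point that the basis columns span $V_A\cap\Z^g$ over $\Z$ (which the paper handles implicitly), and you cite the correct preservation fact — pivoting preserves total unimodularity — whereas the paper's parenthetical justification ``TU matrices are closed under multiplication and taking inverses'' is not literally true and should really be the pivot fact you use.
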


\begin{proof}
  First, let $r = \rank(A) = \rank(B)$, noting that the ranks are equal since the matroids are isomorphic. Since the statement of Lemma \ref{l:unimodular} does not depend on the ordering of the columns, we may simultaneously reorder the columns of $A$ and the columns of $B$ and so assume that the first $r$ rows of $A$ (respectively $B$) form a basis of $M[A]$ (respectively $M[B]$). Furthermore, we may replace $A$ by $\Sigma A$ and $B$ by $\Sigma' B$, where $\Sigma,\Sigma' \in GL_g(\Z)$ are appropriate permutation matrices, and assume that the upper-left-most $r \times r$ submatrix of both $A$ and $B$ have nonzero determinant, in fact determinant $\pm 1$. Then, we can act further on $A$ and $B$ by elements of $GL_g(\Z)$ so that, without loss of generality, both $A$ and $B$ have the form
  \[ {\large \left[\begin{array}{rc|c}&\text{Id}_{r \times r}&*\\\hline &0&0\end{array}\right] }\]

  Note that after these operations, $A$ and $B$ are still totally unimodular; this follows from the fact that totally unimodular matrices are closed under multiplication and taking inverses. But then $A$ and $B$ are totally unimodular matrices with the same support. Indeed, the support of a column $v_i$ of $A$, for each $i = r+1, \dots, n$, is determined by the fundamental circuit of $v_i$ with respect to the basis $\{v_1, \dots, v_r\}$ in $M[A]$, and since $M[A] \cong M[B]$, each $v_i$ and $w_i$ have the same support.

  Thus, by Lemma \ref{l:truemper}, there exists a diagonal matrix $X \in GL_g(\Z)$, whose diagonal entries are $\pm 1$, such that $XA$ can be transformed into $B$ by a sequence of column negations. This is what we
  claimed.
  \end{proof}

\begin{proof}[Proof of Theorem \ref{t:aut}] Let $v_1, \dots, v_n$ be the columns of $A$. Let $X \in \Stab \sigma_A$. Then $X$ acts on the rays of $\overline{\sigma_A}$ via
\[ (v_i v_i^T)\cdot X = X^Tv_i v_i^T X = v_j v_j^T \text{ for some column } v_j. \]
So $v_j = \pm X^T v_i$. But $X^T$ is invertible, so a set of vectors $\{v_{i_1},\dots,v_{i_k}\}$ is linearly independent if and only if $\{X^Tv_{i_1},\dots,X^Tv_{i_k}\}$ is, so $X$ induces a permutation that is in $\Aut(M)$.

Conversely, suppose we are given $\pi \in \Aut(M)$. Let $B$ be the matrix
\[ B=\left[\begin{matrix}
  \ | & & |\  \\
  v_{\pi(1)} & \cdots & v_{\pi(n)} \\
  \ | & & |\
\end{matrix}\right]. \]

Then $M[A] = M[B]$, so by Lemma \ref{l:unimodular}, there exists $X \in GL_g(\Z)$ such that $X^T \cdot v_i = \pm v_{\pi(i)}$ for each $i$. Then $$X^Tv_i v_i^TX = (\pm v_{\pi(i)})(\pm {v_{\pi(i)}}^T) = v_{\pi(i)}{v_{\pi(i)}}^T$$ so $X$ realizes $\pi$ as a permutation of the rays of $\overline{\sigma_A}$.
\end{proof}

\section{The tropical Torelli map}
The classical Torelli map $t_g: \mathcal{M}_g \to \mathcal{A}_g$ sends a curve to its Jacobian.  Jacobians were developed thoroughly in the tropical setting in \cite{mz} and \cite{z}.  Here, we define the tropical Torelli map following \cite{bmv}, and recall the characterization of its image, the so-called Schottky locus, in terms of cographic matroids. We then present a comparison of the number of cells in $\Mtrg$, in the Schottky locus, and in $\Atrg$, for small $g$.

\begin{definition}
The tropical Torelli map
\[ \ttrg: \Mtrg \rightarrow \Atrg \]
is defined as follows.  Consider the first homology group
$H_1(G,\R)$ of the graph $G$, whose elements are formal sums of edges with coefficients in $\R$ lying in the kernel of the boundary map.
Given a genus $g$ tropical curve $C = (G,l,w)$,
we define a positive semidefinite form $Q_C$ on $H_1(G,\R)\oplus \R^{|w|}$, where $|w|:= \sum w(v)$.
The form is $0$ whenever the second summand $\R^{|w|}$ is involved, and on $H_1(G,\R)$ it is 
\[ Q_C(\sum_{e \in E(G)} \alpha_e \cdot e) \,\,\,= \,\,\,\sum_{e \in E(G)}\alpha_e^2 \cdot l(e). \]
Here, with the edges of $G$ are oriented for reference, the $\alpha_e$ are real numbers such that $\sum \alpha_e \cdot e\in H_1(G,\R)$. 

Now, pick a basis of $H_1(G,\Z)$; this identifies $H_1(G,\Z)\oplus \Z^{|w|}$ with the lattice $\Z^g$, and hence $H_1(G,\R)\oplus\R^{|w|}$ with $\R^g = \Z^g \otimes_\Z \R$. Thus $Q_C$ is identified with an element of $\widetilde{S}^g_{\geq 0}$. Choosing a different basis gives another element of $\widetilde{S}^g_{\geq 0}$ only up to a $GL_g(\Z)$-action, so we have produced a well-defined element of $\Atrg$, called the {\bf tropical Jacobian} of $C$.
\end{definition}

\begin{theorem}
  \cite[Theorem 5.1.5]{bmv}
  \label{t:morphism}
  The map
  \[ \ttrg: \Mtrg \to \Atrg \]
  is a morphism of stacky fans.
\end{theorem}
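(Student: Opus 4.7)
The plan is to verify the two conditions of Definition~\ref{d:sfm} cell-by-cell, using the manifestly linear dependence of the Gram matrix $Q_C$ on the edge lengths. Since the set-theoretic map $\ttrg$ is already specified, what remains is: (a) for each cell $\overline{C(G,w)}$ of $\Mtrg$, exhibit a target cell $\overline{\sigma_{D_j}}/\Stab(\sigma_{D_j})$ of $\Atrg$ together with an integral-linear map $L:\R^{|E(G)|}\to\R^{\binom{g+1}{2}}$ making the diagram in Definition~\ref{d:sfm} commute, and (b) verify the global continuity of $\ttrg$.

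Fix a combinatorial type $(G,w)$ of genus $g$, orient the edges of $G$, and choose a $\Z$-basis $\gamma_1,\dots,\gamma_h$ of $H_1(G,\Z)$, where $h=g(G)$. Writing $\gamma_i=\sum_e \alpha_{i,e}\,e$ with $\alpha_{i,e}\in\Z$, and augmenting with the standard basis of $\Z^{|w|}$, we identify $H_1(G,\Z)\oplus\Z^{|w|}$ with $\Z^g$. In this basis the form $Q_C$ has matrix
\[
L_G(l)\;=\;\sum_{e\in E(G)} l(e)\,\tilde v_e\tilde v_e^{\,T},
\]
where $\tilde v_e=(\alpha_{1,e},\dots,\alpha_{h,e},0,\dots,0)^{T}\in\Z^{g}$. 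This defines an \emph{integral-linear} map $L_G:\R^{E(G)}\to\R^{\binom{g+1}{2}}$.

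Next I identify the target cell. The matrix $A=(\alpha_{i,e})$ is, up to relabeling, a cycle matrix of $G$ with respect to a spanning tree, hence totally unimodular; so the augmented matrix $\tilde A$ with columns $\tilde v_e$ is totally unimodular and represents the cographic matroid $M^{*}(G)$ as a simple regular matroid of rank $\le g$. By Construction~\ref{c:5} and Proposition~\ref{p:zon}, the open cone $\sigma_{\tilde A}=L_G(\R_{>0}^{E(G)})$ is a secondary cone in $\widetilde S^{g}_{\ge 0}$, and is therefore $GL_g(\Z)$-equivalent to one of the chosen representatives $\sigma_{D_j}$ in the construction of $\Atrg$. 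Picking any $X\in GL_g(\Z)$ with $\sigma_{\tilde A}\cdot X=\sigma_{D_j}$ and post-composing $L_G$ with the map $M\mapsto X^{T}MX$ produces the required integral-linear $L$ restricting to a map $\R_{>0}^{E(G)}\to\sigma_{D_j}$. Commutativity of the diagram in Definition~\ref{d:sfm} is then built into the construction: both compositions send edge lengths $l$ to the $GL_g(\Z)$-class of the Gram matrix $Q_C$, which is the tropical Jacobian of $C=(G,l,w)$ by definition.

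Finally, continuity. On the cell $\overline{C(G,w)}$, the map $\ttrg$ factors through $L_G$ and the continuous projection $\widetilde S^{g}_{\ge 0}\twoheadrightarrow\Atrg$, hence is continuous; since $\Mtrg$ has the quotient topology from $\coprod_{(G,w)}\overline{C(G,w)}$, the global map $\ttrg$ is continuous provided the cell-wise definitions agree under the edge-contraction relation $\sim$, which is immediate because contracting an edge of length $0$ simply drops the corresponding rank-one term $l(e)\tilde v_e\tilde v_e^T=0$ from $L_G(l)$ and does not alter the resulting quadratic form. The only nontrivial ingredient in the whole argument is the total unimodularity of the cycle matrix (equivalently, regularity of the cographic matroid), which is the key fact that allows the cone $\sigma_{\tilde A}$ to appear as an actual cell of $\Atrg$; this is the step I expect to be the conceptual heart of the proof, while everything else is bookkeeping about choices of basis and orientation.
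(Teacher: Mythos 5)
The paper does not actually reprove this statement: it cites \cite[Theorem~5.1.5]{bmv} and merely observes that the argument there goes through unchanged because Definition~\ref{d:sfm} is taken verbatim from \cite{bmv}. So your sketch is an independent reconstruction, and its overall shape (attach to each cell $\overline{C(G,w)}$ the integral-linear map $l\mapsto\sum_e l(e)\,\tilde v_e\tilde v_e^T$, identify the target cell through the cographic matroid, and read off commutativity) is the right one. However, three steps as written are not correct.

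First, for an \emph{arbitrary} $\Z$-basis $\gamma_1,\dots,\gamma_h$ of $H_1(G,\Z)$ the matrix $A=(\alpha_{i,e})$ is not ``up to relabeling a cycle matrix,'' and is generally \emph{not} totally unimodular; already for the theta graph, replacing a fundamental cycle basis by $(\gamma_1+\gamma_2,\gamma_2)$ produces an entry $\pm 2$. You must either fix a fundamental cycle basis with respect to a spanning tree, or note that an arbitrary basis gives $A=UA_0$ with $U\in GL_h(\Z)$ and $A_0$ totally unimodular, so $\sigma_A=U\sigma_{A_0}U^T$ is $GL_g(\Z)$-equivalent to a secondary cone. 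Second, $\widetilde A$ represents $M^*(G)$, which is typically \emph{not} simple (bridges of $G$ give loops, $2$-edge-cuts give parallel elements), so Construction~\ref{c:5} and Proposition~\ref{p:zon} do not apply to $\widetilde A$ directly. What is true --- and what Theorem~\ref{t:cogr} says --- is that the image cone is $C(\widetilde{M^*(G)})$; the cone $\sigma_{\widetilde A}$ only records the distinct nonzero rank-one matrices $\tilde v_e\tilde v_e^T$, i.e.\ the simplification. Third, the appeal to ``the continuous projection $\widetilde S^g_{\geq 0}\twoheadrightarrow\Atrg$'' is exactly the kind of global statement Section~4.4 cautions against: the second Voronoi decomposition of $\widetilde S^g_{\geq 0}$ fails to be locally finite along its boundary, and $\widetilde S^g_{\geq 0}/GL_g(\Z)$ is not Hausdorff, so the continuity of such a map is far from free. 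The safe argument is the purely cell-by-cell one: compose the linear map $L$ with the maps $\overline{\sigma_{D_j}}\to C(D_j)\to\Atrg$ that are part of the stacky fan data (each continuous by construction), and then check compatibility with the gluing relation $\sim$ on $\Mtrg$ --- which your final observation about dropping the term $l(e)\tilde v_e\tilde v_e^T$ upon contracting $e$ does correctly handle.
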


Note that the proof by Brannetti, Melo, and Viviani of Theorem \ref{t:morphism} is correct under the new definitions.  In particular, the definition of a morphism of stacky fans has not changed.  

The following theorem tells us how the tropical Torelli map behaves, at least on the level of stacky cells.
Given a graph $G$, its cographic matroid is denoted $M^*(G)$, and $\widetilde{M^*(G)}$ is then the matroid obtained by removing loops and replacing each parallel class with a single element.  See \cite[Definition 2.3.8]{bmv}.

\begin{figure}%
\includegraphics[width=1.5in]{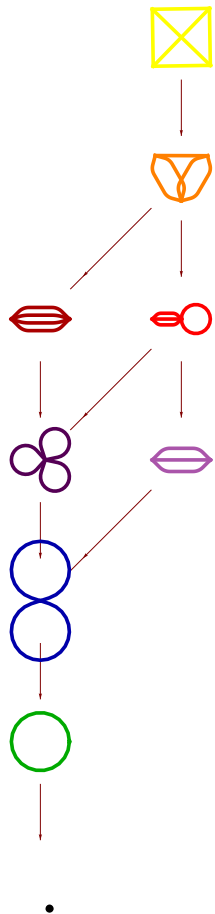}%
\vskip -.7in
\caption{Poset of cells of $\Atr{3}=\Acogr{3}$.  Each cell corresponds to a cographic matroid, and for convenience, we draw a graph $G$ in order to represent its cographic matroid $M^*(G)$.}%
\label{f:a_3}%
\end{figure}
\begin{theorem}\label{t:cogr}
\cite[Theorem 5.1.5]{bmv}
The map $\ttrg$ sends the cell $C(G,w)$ of $\Mtrg$ surjectively to the cell $C(\widetilde{M^*(G)})$. 
\end{theorem}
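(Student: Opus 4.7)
The plan is to unpack the definition of the tropical Jacobian $Q_C$ in matrix form and match it directly to the cone description of $C(\widetilde{M^*(G)})$ coming from Construction \ref{c:5}, and then verify surjectivity by a transparent parameterization of preimages.

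First, choose a basis $c_1,\ldots,c_h$ of $H_1(G,\Z)$, where $h=g-|w|$, and write each basis cycle as $c_i=\sum_{e\in E(G)}\alpha_{e,i}e$ with respect to some chosen orientation of the edges. Set $v_e=(\alpha_{e,1},\ldots,\alpha_{e,h})^T\in\Z^h$. Using the identification $H_1(G,\Z)\oplus\Z^{|w|}\cong\Z^g$ and padding by $|w|$ zero coordinates, a direct computation straight from the definition of $Q_C$ gives
\[
 Q_C \,=\, \sum_{e\in E(G)} l(e)\, v_e v_e^T \,\in\, \widetilde{S}^g_{\geq 0}.
\]
The $\R^{|w|}$ summand automatically contributes a zero block, in agreement with the definition.

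Second, I would observe that the matrix $A$ whose columns are the $v_e$ is nothing but the cycle-matrix representation of $G$ with respect to the chosen basis: if one chooses $c_1,\ldots,c_h$ to be the fundamental cycles of a spanning tree, $A$ has block form $[\,I\ |\ N\,]$ with $N$ having entries in $\{0,\pm 1\}$. This is classically totally unimodular, and it represents the cographic matroid $M^*(G)$. Two combinatorial translations are then crucial: an edge $e\in E(G)$ is a bridge of $G$ (hence a matroid loop of $M^*(G)$) if and only if $v_e=0$; and two edges $e_1,e_2$ are parallel in $M^*(G)$ (i.e.\ form a bond of size two) if and only if $v_{e_1}=\pm v_{e_2}$, in which case $v_{e_1}v_{e_1}^T=v_{e_2}v_{e_2}^T$. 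Thus if $A'$ is obtained from $A$ by deleting zero columns and keeping a single representative from each parallel class, then $A'$ is totally unimodular and represents $\widetilde{M^*(G)}$, and $\sigma_{A'}$ is (by Proposition \ref{p:zon}) a representative of the cell $C(\widetilde{M^*(G)})$.

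Third, grouping the sum for $Q_C$ by parallel class yields
\[
 Q_C \,=\, \sum_{[e]} \Bigl(\sum_{e'\in[e]} l(e')\Bigr)\, v_{[e]}v_{[e]}^T,
\]
a strictly positive combination of the rank-one generators of $\sigma_{A'}$. Hence $Q_C\in \sigma_{A'}$, which shows $\ttrg(C(G,w))\subseteq C(\widetilde{M^*(G)})$. For surjectivity, any $Q\in\sigma_{A'}$ has an expansion $Q=\sum_{[e]}\lambda_{[e]} v_{[e]}v_{[e]}^T$ with $\lambda_{[e]}>0$; defining $l\colon E(G)\to\R_{>0}$ by distributing each $\lambda_{[e]}$ positively among the edges of $[e]$ (and assigning arbitrary positive lengths to bridges) produces a tropical curve $(G,l,w)\in C(G,w)$ mapping to $Q$. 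I would close by noting that well-definedness modulo the stacky-cell symmetries is automatic: any $\varphi\in\Aut(G,w)$ permutes edges, hence permutes the generators $v_e v_e^T$ and induces an automorphism of $\widetilde{M^*(G)}$ that lies in $\Stab(\sigma_{A'})$ via Theorem \ref{t:aut}.

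The only nontrivial step is the matroid-theoretic dictionary in the second paragraph, so that is where I would expect the main obstacle: one must be precise that it is the cycle matrix (not the vertex-edge incidence matrix) that appears in $Q_C$, that this matrix represents the cographic and not the graphic matroid, and that the operations ``remove matroid loops'' and ``collapse matroid parallel classes'' used to pass from $M^*(G)$ to $\widetilde{M^*(G)}$ correspond exactly to discarding the zero columns and collapsing proportional columns of $A$. Once these identifications are laid out, the remainder of the argument is purely linear-algebraic and essentially a bookkeeping exercise.
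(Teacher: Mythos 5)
The paper does not prove this theorem; both here and in the surrounding paragraph it is cited directly as Theorem 5.1.5 of Brannetti--Melo--Viviani \cite{bmv}, so there is no internal proof to compare yours against. Your independent argument is essentially correct and follows the natural route: writing $Q_C=\sum_{e\in E(G)}l(e)\,v_ev_e^T$ with $v_e$ the edge-columns of a cycle matrix of $G$ padded by $|w|$ zeros, recognizing that matrix as a totally unimodular representation of the \emph{cographic} matroid $M^*(G)$ (not the graphic one), identifying the deletion of zero columns and collapsing of $\pm$-proportional columns with the simplification $M^*(G)\to\widetilde{M^*(G)}$, observing that strict positivity of $l$ places $Q_C$ in the open cone $\sigma_{A'}$, and obtaining surjectivity by redistributing coefficients over parallel classes, with arbitrary positive lengths for bridges.

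One phrasing in your last paragraph should be tightened. You say that $\varphi\in\Aut(G,w)$ ``permutes the generators $v_ev_e^T$,'' but $\varphi$ permutes $E(G)$ while the vectors $v_e$ live in $\R^h$ and depend on the chosen basis of $H_1(G,\Z)$, so $\varphi$ does not literally permute them. What is true is that $\varphi$ induces a matroid automorphism of $\widetilde{M^*(G)}$, and Lemma \ref{l:unimodular} (equivalently Theorem \ref{t:aut}) then supplies $X\in GL_g(\Z)$ with $X^Tv_e=\pm v_{\varphi(e)}$ for all $e$, so that $X\in\Stab(\sigma_{A'})$ implements the induced permutation of rays by conjugation $Q\mapsto X^TQX$. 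That is the precise mechanism by which the map descends to the quotient cells; stated this way the well-definedness step is airtight and your proof is complete.
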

We denote by $\Acogrg$ the stacky subfan of $\Atrg$ consisting of those cells
\[ \{C(M): M \text{ a simple cographic matroid of rank } \leq g \}. \]
The cell $C(M)$ was defined in Construction \ref{c:5}. Note that $\Acogrg$ sits inside the zonotopal subfan of Section 5:
\[ \Acogrg \subseteq \Azong \subseteq \Atrg. \]
Also, $\Acogrg = \Atrg$ when $g \leq 3$, but not when $g \geq 4$ (\cite[Remark 5.2.5]{bmv}). The previous theorem says  that the image of $\ttrg$ is precisely $\Acogrg \subseteq \Atrg$. So, in analogy with the classical situation, we call $\Acogrg$ the {\bf tropical Schottky locus}.

Figures \ref{f:m_3} and \ref{f:a_3} illustrate the tropical Torelli map in genus 3.  The cells of $M^{\text{tr}}_3$ in Figure \ref{f:m_3} are color-coded according to the color of the cells of $A^{\text{tr}}_3$ in Figure \ref{f:a_3} to which they are sent.  These figures serve to illustrate the correspondence in Theorem \ref{t:cogr}.

Our contribution in this section is to compute the poset of cells of $\Acogrg$, for $g \leq 5$, using {\sc Mathematica}. First, we computed the cographic matroid of each graph of genus $\leq g$, and discarded the ones that were not simple. Then we checked whether any two matroids obtained in this way were in fact isomorphic. Part of this computation was done by hand in the genus 5 case, because it became intractable to check whether two 12-element matroids were isomorphic. Instead, we used some heuristic tests and then checked by hand that, for the few pairs of matroids passing the tests, the original pair of graphs were related by a sequence of vertex-cleavings and Whitney flips.
This condition ensures that they have the same cographic matroid; see \cite{o}.

\begin{theorem}
\label{t:sch}{\ } We obtained the following computational results:
\begin{enumerate}
\item
  The tropical Schottky locus $\Acogr{3}$ has nine cells and $f$-vector
  \[ (1,1,1,2,2,1,1). \]
Its poset of cells is shown in Figure \ref{f:a_3}.
\item  The tropical Schottky locus $\Acogr{4}$ has 25 cells and $f$-vector
  \[ (1,1,1,2,3,4,5,4,2,2). \]
\item
  The tropical Schottky locus $\Acogr{5}$ has 92 cells and $f$-vector
  \[ (1,1,1,2,3,5,9,12,15,17,15,7,4). \]
  \end{enumerate}
\end{theorem}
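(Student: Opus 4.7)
The plan is to leverage the enumeration of $P_g$ from Theorem~\ref{t:comp} and use Theorem~\ref{t:cogr} to transport the cell structure to the Schottky locus. By Theorem~\ref{t:cogr}, $\ttrg$ sends the cell $C(G,w)$ surjectively to $C(\widetilde{M^*(G)})$, so the cells of $\Acogrg$ are in bijection with isomorphism classes of simple cographic matroids of rank $\le g$, each arising as $\widetilde{M^*(G)}$ for some graph $G$ underlying a combinatorial type in $P_g$. Since vertex weights play no role in the cographic construction, the first step is to work with the set of underlying graphs $G$ appearing in $P_g$. For each such $G$ I would compute $M^*(G)$ and discard those graphs for which $M^*(G)$ is not simple---equivalently, graphs having bridges or $2$-edge cuts---since those cells already appear from strictly smaller bridgeless, $2$-edge-connected graphs via bridge-deletion and $2$-cut collapsing.

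The surviving graphs yield every simple cographic matroid of rank $\le g$, possibly with repetitions. I would then group them into isomorphism classes of matroids; each such class corresponds to a cell of $\Acogrg$, whose rank in the poset equals the ground-set size $|E(G)|$. Counting isomorphism classes rank by rank gives the $f$-vectors in the statement, and consistency checks (for instance, the top rank equals $3g-3$, attained by trivalent graphs whose cographic matroid is already simple) corroborate the totals $9$, $25$, and $92$. For the covering relations, one uses the fact that edge-contraction in $P_g$ pushes forward along $\ttrg$ to a stacky face inclusion between the corresponding cells of $\Acogrg$, by Theorem~\ref{t:morphism}; the Hasse diagram of $\Acogrg$ is then obtained by pushing forward the already-computed Hasse diagram of $P_g$ and quotienting by matroid isomorphism. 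In particular, in genus~$3$ the resulting poset matches Figure~\ref{f:a_3}.

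The hard part is testing matroid isomorphism as the ground set grows, which becomes painful in genus~$5$ where some matroids have $12$ elements. The practical remedy is to first separate most pairs using cheap invariants (rank, cocircuit size distribution, Tutte polynomial, automorphism group order) and then resolve the small number of surviving ambiguous pairs graph-theoretically rather than combinatorially on the matroid. For this one invokes Whitney's $2$-isomorphism theorem (see~\cite{o}): two graphs have isomorphic cographic matroids precisely when they are related by a finite sequence of vertex-cleavings, vertex-identifications, and Whitney twists. Since the graphs at issue are small and few in number, exhibiting or ruling out such a sequence by hand for each flagged pair is feasible and completes the classification, yielding the stated $f$-vectors and cell counts.
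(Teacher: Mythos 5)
Your proposal is essentially the same approach as the paper's: enumerate the underlying graphs, compute cographic matroids, discard the non-simple ones, and classify up to isomorphism using cheap invariants plus hand-verification of the remaining ambiguous pairs via Whitney's $2$-isomorphism theorem (vertex-cleavings and Whitney twists). The only cosmetic difference is that you start from the already-computed poset $P_g$ rather than directly from "all graphs of genus $\leq g$," but since every $3$-edge-connected graph of genus $\leq g$ arises as the underlying graph of a valid combinatorial type, these amount to the same computation.
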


\begin{remark}
    Actually, since $\Acogr{3} = \Atr{3}$, the results of part (i) of Theorem \ref{t:sch} were already known, say in \cite{v}.
\end{remark}

Tables 1 and 2 show a comparison of the number of maximal cells and the number of total cells, respectively, of $\Mtrg$, $\Acogrg$, and $\Atrg$. The numbers in the first column of Table 2 were obtained in \cite{mp} and in Theorem \ref{t:comp}.  The first column of Table 1 is the work of Balaban \cite{b}. The results in the second column are our contribution in Theorem \ref{t:sch}.  The third columns are due to \cite{engel} and \cite{eg}; computations for $g > 5$ were done by Vallentin \cite{v}.

\begin{table}[h!]
\begin{tabular}{c|c|c|c}
$g$ & $\Mtrg$ & $\Acogrg$ & $\Atrg$ \\
\hline
2 & 2 & 1 & 1 \\
3 & 5 & 1 & 1 \\
4 & 17 & 2 & 3 \\
5 & 71 & 4 & 222
\end{tabular}
\caption{Number of maximal cells in the stacky fans $\Mtrg$, $\Acogrg$, and $\Atrg$.}
\end{table}
\begin{table}[h!]
\begin{tabular}{c|c|c|c}
$g$ & $\Mtrg$ & $\Acogrg$ & $\Atrg$ \\
\hline
2 & 7 & 4 & 4 \\
3 & 42 & 9 & 9 \\
4 & 379 & 25 & 61 \\
5 & 4555 & 92 & 179433
\end{tabular}
\caption{Total number of cells in the stacky fans $\Mtrg$, $\Acogrg$, and $\Atrg$.}
\end{table}

It would be desirable to extend our computations of $\Acogrg$ to $g \geq 6$, but
this would require some new ideas on effectively testing matroid isomorphisms.

\section{Tropical curves via level structure}
One problem with the spaces $\Mtrg$ and $\Atrg$ is that although they are tropical moduli spaces, they do not ``look'' very tropical: they do not satisfy a tropical balancing condition (see \cite{ms}). In other words: stacky fans, so far, are not tropical varieties. But what if we allow ourselves to consider finite-index covers of our spaces -- can we then produce a more tropical object? In what follows, we answer this question for the spaces $A^{\text{tr}}_2$  and $A^{\text{tr}}_3$. The uniform matroid $U^2_4$ and the Fano matroid $F_7$ play a role. We are grateful to Diane Maclagan for suggesting this question and the approach presented here.

Given $n \geq 1$, let $\mathbb{FP}^n$ denote the complete polyhedral fan in $\R^n$ associated to
projective space $\P^n$, regarded as a  toric variety.
 Concretely, we fix the rays of $\mathbb{FP}^n$ to be generated by
\[ e_1, \dots, e_n,\ \  e_{n+1} := -e_1-\dots-e_n, \]
and each subset of at most $n$ rays spans a cone in $\mathbb{FP}^n$. 
So $\mathbb{FP}^n$ has $n+1$ top-dimensional cones. Given $S \subseteq \{1,\dots,n+1\}$, let $\cone(S)$ denote the open cone $\R_{>0}\{e_i:i\in S\}$ in $\mathbb{FP}^n$, let $\cone(\hati):=\cone(\{1,\dots,\hati,\dots,n+1\})$, and let $\overline{\cone(S)}$ be the closed cone corresponding to $S$.
Note that the polyhedral fan $\mathbb{FP}^n$ is also a stacky fan: each open cone can be equipped with trivial symmetries. 

\subsection{A tropical cover for $\Atr{3}$}
By the classification in Sections 4.1--4.3 of \cite{v}, we note that
\[ \Atr{3} = \left(\coprod_{M \subseteq MK_4} C(M)\right) / \sim. \]
In the disjoint union above, the symbol $MK_4$ denotes the graphic (equivalently, in this case, cographic) matroid of the graph $K_4$, and $M \subseteq M'$ means that $M$ is a submatroid of $M'$, i.e. obtained by deleting elements. The cell $C(M)$ of a regular matroid $M$ was defined in Construction \ref{c:5}.  There is a single maximal cell $C(MK_4)$ in  $\Atr{3}$, and the other cells are stacky faces of it.  The cells are also listed in Figure~\ref{f:a_3}.

Now define a continuous map
\[ \pi: \mathbb{FP}^6 \to \Atr{3} \]
as follows. Let $A$ be a $3 \times 6$ unimodular matrix representing $MK_4$, for example
\[ A = \left(\begin{matrix}
  1 & 0 & 0 & 1 & 1 & 0 \\
  0 & 1 & 0 & -1 & 0 & 1 \\
  0 & 0 & 1 & 0 & -1 & -1
\end{matrix}\right), \]
and let $\overline{\sigma_A}$ be the cone in $\widetilde{S}^3_{\geq 0}$ with rays $\{v_iv_i^T\}$, where the $v_i$'s are the columns of $A$, as in Construction \ref{c:5}. Fix, once and for all, a 
Fano matroid structure on the set $\{1, \ldots, 7\}$.  For example, we could take $F_7$ to have circuits $\{124,235,346,457,156,267,137\}$.

Now, for each $i = 1, \dots, 7$, the deletion $F_7 \setminus \{i\}$ is isomorphic to $MK_4$, so let 
\[\pi_{\hati}: [7] \setminus \{i\} \rightarrow E(MK_4)\]
be any bijection inducing such an isomorphism. Now define
\[ \alpha_{\hati}: \overline{\cone(\hati)} \rightarrow \Atr{3} \]
as the composition
\[ \overline{\cone(\hati)} \xrightarrow{\ L_{\hati}\ } \overline{\sigma_A} \twoheadlongrightarrow \frac{\overline{\sigma_A}}{\Stab \sigma_A} = C(MK_4) \longrightarrow \Atr{3}\]
where $L_{\hati}$ is the integral-linear map arising from $\pi_\hati$.

Now, each $\alpha_\hati$ is clearly continuous, and to paste them together into a map on all of $\mathbb{FP}^6$, we need to show that they agree on intersections. Thus, fix $i \neq j$ and let $S \subseteq \{1,\dots,7\}\setminus\{i,j\}$.
We want to show that
\[ \alpha_\hati = \alpha_{\hatj} \text{ on } \overline{\cone(S)}. \]

Indeed, the map $L_\hati$ sends $\overline{\cone(S)}$ isomorphically to $\overline{\sigma_{A|_{\pi_\hati(S)}}}$, where $A|_{\pi_\hati(S)}$ denotes the submatrix of $A$ gotten by taking the columns indexed by $\pi_\hati(S)$. Furthermore, the bijection on the rays of the cones agrees with the isomorphism of matroids
\[ F_7|_S \xrightarrow{\ \ \cong\ \ } MK_4|_{\pi_\hati(S)}. \]
Similarly, $L_{\hatj}$ sends $\overline{\cone(S)}$ isomorphically to $\overline{\sigma_{A|_{\pi_{\hatj}(S)}}}$, and the map on rays agrees with the matroid isomorphism
\[ F_7|_S \xrightarrow{\ \ \cong\ \ } MK_4|_{\pi_\hatj(S)}. \]
Hence $MK_4|_{\pi_\hati(S)} \cong MK_4|_{\pi_\hatj(S)}$ and by Theorem \ref{t:aut}, there exists $X \in GL_3(\Z)$ such that the diagram commutes:
\[ \xymatrix{
\ & & \overline{\sigma_{A|_{\pi_\hati(S)}}} \ar[dd]^X\\
\overline{\cone(S)} \ar[rru]^{L_\hati}\ar[rrd]^{L_\hatj}& & \ \\
& & \overline{\sigma_{A|_{\pi_\hatj(S)}}} \\
} \]
We conclude that $\alpha_\hati$ and $\alpha_\hatj$ agree on $\overline{\cone(S)}$, since $L_\hati$ and $L_\hatj$ differ only by a $GL_3(\Z)$-action.

Therefore, we can glue the seven maps $\alpha_\hati$ together to obtain a continuous map $\alpha: \mathbb{FP}^6 \rightarrow \Atr{3}$.

\begin{theorem}
  \label{t:p6}
  The map $\alpha: \mathbb{FP}^6 \rightarrow \Atr{3}$ is a surjective morphism of stacky fans. Each of the seven maximal cells of $\mathbb{FP}^6$ is mapped surjectively onto the maximal cell of $\Atr{3}$.
\end{theorem}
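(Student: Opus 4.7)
The construction of $\alpha$ immediately preceding the theorem already establishes that $\alpha$ is a well-defined continuous map, via the gluing argument showing that $\alpha_\hati$ and $\alpha_\hatj$ agree on $\overline{\cone(S)}$ for every $S \subseteq \{1,\ldots,7\} \setminus \{i,j\}$. The plan is to verify the two conditions of Definition~\ref{d:sfm} cell by cell on the source, then to deduce both surjectivity and the claim about maximal cells directly from the construction.

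First, I would verify the morphism axioms. A cell of $\mathbb{FP}^6$ has the form $\cone(S)$ for some $S \subseteq \{1,\ldots,7\}$ with $|S| \leq 6$, equipped with trivial stabilizer. Given such a cell, choose any $i \notin S$; by construction $\alpha$ restricted to $\cone(S)$ coincides with $\alpha_\hati|_{\cone(S)}$. The integral-linear map $L_\hati$ sends $\cone(S)$ to $\sigma_{A|_{\pi_\hati(S)}}$, which is a face of $\sigma_A$ and is itself a secondary cone. Since $MK_4$ is simple and regular, so is its submatroid $MK_4|_{\pi_\hati(S)} \cong F_7|_S$, so this face is $GL_3(\Z)$-equivalent to one of our chosen representatives $\sigma_{D_j}$ corresponding to the cell $C(F_7|_S)$ of $\Atr{3}$. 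Composing $L_\hati$ with a suitable element of $GL_3(\Z)$ gives an integral-linear map $L: \R^6 \to \R^{m_j}$ restricting to $\cone(S) \to \sigma_{D_j}$, and the required diagram commutes by the very definition of $\alpha_\hati$ and the quotient maps. Thus the morphism axioms of Definition~\ref{d:sfm} are satisfied with target cell $C(F_7|_S)$.

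Next, I would verify surjectivity. By the classification cited in the definition of $\Atr{3}$ as a union $\coprod_{M \subseteq MK_4} C(M) / {\sim}$, every cell of $\Atr{3}$ has the form $C(M)$ for some submatroid $M$ of $MK_4$. Fixing any $i \in \{1,\ldots,7\}$ and the bijection $\pi_\hati: \{1,\ldots,7\} \setminus \{i\} \to E(MK_4)$, every such $M$ arises as $MK_4|_{\pi_\hati(S)}$ for some $S \subseteq \{1,\ldots,7\} \setminus \{i\}$, so $\alpha_\hati(\overline{\cone(S)})$ surjects onto $C(M)$. Taking the union over all cells of $\Atr{3}$ gives surjectivity of $\alpha$. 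For the last assertion, each $L_\hati$ is a bijection $\overline{\cone(\hati)} \to \overline{\sigma_A}$ since $\pi_\hati$ is a bijection of rays that preserves lattice lengths, and $\overline{\sigma_A}$ surjects onto $C(MK_4)$ via the quotient by $\Stab(\sigma_A)$; composing shows that each of the seven $\alpha_\hati$ is surjective onto the unique maximal cell $C(MK_4)$ of $\Atr{3}$.

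The only subtle point I expect is the bookkeeping for the integral-linear map in Definition~\ref{d:sfm}: a priori $L_\hati|_{\cone(S)}$ lands in $\sigma_{A|_{\pi_\hati(S)}}$, which need not literally equal the fixed representative $\sigma_{D_j}$ but is $GL_3(\Z)$-equivalent to it. This is harmless because the target cell is defined modulo $\Stab(\sigma_{D_j})$ and the intertwining $GL_3(\Z)$-element can be absorbed in the quotient without disturbing commutativity. Everything else is a direct consequence of the construction of $\alpha$, the identification $F_7 \setminus \{i\} \cong MK_4$, and the fact (via Theorem~\ref{t:aut}) that the combinatorics of the stabilizer $\Stab(\sigma_A)$ is faithfully recorded by $\Aut(MK_4)$, so that no further compatibility problems arise.
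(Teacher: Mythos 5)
Your proposal is correct and follows essentially the same route as the paper's own (very terse) proof: both observe that each cell $\cone(S)$ is carried by an integral-linear map $L_\hati$ onto the cell $C(F_7|_S)$, deduce surjectivity from the fact that every submatroid of $MK_4 = F_7 \setminus \{i\}$ is a restriction $F_7|_S$, and note that each $\overline{\cone(\hati)}$ maps onto $C(MK_4)$. The paper simply asserts these three facts as "by construction," while you spell out the verification of the morphism axioms of Definition~\ref{d:sfm}, including the harmless subtlety that $\sigma_{A|_{\pi_\hati(S)}}$ is only $GL_3(\Z)$-equivalent to, not literally equal to, the fixed representative $\sigma_{D_j}$ — a point worth making explicit and handled correctly.
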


\begin{proof}
  By construction, $\alpha$ sends each cell $\cone(S)$ of $\mathbb{FP}^6$ surjectively onto the cell of $\Atr{3}$ corresponding to the matroid $F_7|_S$, and each of these maps is induced by some integral-linear map $L_\hati$. That $\alpha$ is surjective then follows from the fact that every submatroid of $MK_4$ is a proper submatroid of $F_7$. Also, by construction, $\alpha$ maps each maximal cell $\cone(\hati)$ of $\mathbb{FP}^6$ surjectively to the cell $C(MK_4)$ of $\Atr{3}$.
\end{proof}

\subsection{A tropical cover for $\Atr{2}$}
Our strategy in Theorem \ref{t:p6} for constructing a covering map $\mathbb{FP}^6 \rightarrow \Atr{3}$ was to use the combinatorics of the Fano matroid to paste together seven copies of $MK_4$ in a coherent way. In fact, an analogous, and easier, argument yields a covering map $\mathbb{FP}^3 \to \Atr{2}$. We will use $U^2_4$ to paste together four copies of $U^2_3$.  Here, $U^d_n$ denotes the uniform rank $d$ matroid on $n$ elements.

The space $\Atr{2}$ can be given by
\[ \Atr{2} = \left(\coprod_{M \subseteq U^2_3} C(M)\right) / \sim. \]
It has a single maximal cell $C(U^2_3)$, and the three other cells are stacky faces of it of dimensions 0, 1, and 2.  See Figure~\ref{f:a2_fan}.

Analogously to Section 7.1, let
\[ A = \left(\begin{matrix} 1 & 0 & 1 \\ 0 & 1 & -1 \end{matrix}\right),\]
say, and for each $i = 1, \dots, 4$, define
\[ \beta_\hati: \overline{\cone(\hati)} \to \Atr{2} \]
by sending $\overline{\cone(\hati)}$ to $\overline{\sigma_A}$ by a bijective linear map preserving lattice points. Here, any of the $3!$ possible maps will do, because the matroid $U^2_3$ has full automorphisms.

Just as in Section 7.1, we may check that the four maps $\alpha_\hati$ agree on their overlaps, so we obtain a continuous map
\[ \beta: \mathbb{FP}^3 \rightarrow \Atr{2}. \]

\begin{proposition}
  The map $\beta: \mathbb{FP}^3 \rightarrow \Atr{2}$ is a surjective morphism of stacky fans. Each of the four maximal cells of $\mathbb{FP}^3$ maps surjectively onto the maximal cell of $\Atr{2}$.
\end{proposition}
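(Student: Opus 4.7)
The plan is to argue in direct analogy with Theorem \ref{t:p6}, using $U^2_4$ in place of $F_7$ and $U^2_3$ in place of $MK_4$. The preceding discussion already establishes that the four maps $\beta_\hati$ agree on overlaps and hence glue to a continuous map $\beta: \mathbb{FP}^3 \to \Atr{2}$; what remains is to verify that $\beta$ is a stacky fan morphism, that it is surjective, and that each maximal cell of $\mathbb{FP}^3$ is sent surjectively onto the maximal cell $C(U^2_3)$.

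First I would verify the stacky fan morphism condition (Definition \ref{d:sfm}). For any cell $\cone(S)$ of $\mathbb{FP}^3$ (with $S \subseteq \{1,2,3,4\}$), I would show that $\beta$ maps $\cone(S)$ into the cell $C(U^2_4|_S)$ of $\Atr{2}$. Since $S$ is a subset of the ground set of $U^2_4$, the restriction $U^2_4|_S$ is a submatroid of $U^2_3$ (indeed, for $|S|\le 3$ it is simply $U^2_{|S|}$, and for $|S|=4$ the whole cell $\cone(\hati)$ is already handled and mapped onto $C(U^2_3)$). The lift is precisely the integral-linear map obtained by restricting $L_\hati$ to $\sspan \overline{\cone(S)}$ for any $\hati\notin S$; that this is integral-linear and factors $\beta$ through the quotient map $\overline{\sigma_{A|_{\pi_\hati(S)}}} \twoheadrightarrow C(U^2_4|_S)$ is exactly how the $\beta_\hati$ were defined.

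Next I would establish surjectivity. By construction, the image $\beta(\overline{\cone(\hati)})$ equals $\beta_\hati(\overline{\cone(\hati)})$, which by definition covers all of $\overline{\sigma_A}/\Stab(\sigma_A) = C(U^2_3)$, so each maximal cell of $\mathbb{FP}^3$ maps surjectively onto the maximal cell of $\Atr{2}$. Since every cell of $\Atr{2}$ is a stacky face of $C(U^2_3)$ (as noted immediately after the recalled description of $\Atr{2}$) and every proper submatroid of $U^2_3$ is isomorphic to $U^2_k|_T$ for some $T\subset\{1,\dots,4\}$ with $|T|<4$, every cell of $\Atr{2}$ is hit. This gives the surjectivity of $\beta$.

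The main potential obstacle is only the bookkeeping of the gluing: one must check that the freedom in choosing the bijective lattice-preserving maps $\overline{\cone(\hati)} \to \overline{\sigma_A}$ does not cause the $\beta_\hati$ to disagree on a shared face $\overline{\cone(S)}$. This is where we need $\Aut(U^2_3)$ to act as the full symmetric group on its three elements; combined with Theorem \ref{t:aut}, this produces the required $X\in GL_2(\Z)$ matching the two candidate linear maps $L_\hati|_{\overline{\cone(S)}}$ and $L_\hatj|_{\overline{\cone(S)}}$ on the level of $C(U^2_3)$. Once this compatibility is in hand, the proof is complete by the same formal argument that finished Theorem \ref{t:p6}.
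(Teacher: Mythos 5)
Your proposal is correct and follows essentially the same approach as the paper: the paper's proof simply says the argument is exactly analogous to Theorem \ref{t:p6}, with the observation that every one-element deletion of $U^2_4$ is isomorphic to $U^2_3$ replacing the corresponding fact about $F_7$ and $MK_4$. You have merely spelled out in more detail the verifications (stacky fan morphism condition, surjectivity, and compatibility on overlaps via $\Aut(U^2_3)$ and Theorem \ref{t:aut}) that the paper leaves implicit by appealing to the analogy.
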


\begin{proof}
  The proof is exactly analogous to the proof of Theorem \ref{t:p6}. Instead of noting that every one-element deletion of $F_7$ is isomorphic to $MK_4$, we make the easy observation that every one-element deletion of $U^2_4$ is isomorphic to $U^2_3$.
\end{proof}

\begin{remark}
  We do not know a more general construction for $g \geq 4$. We seem to be relying on the fact that all cells of $\Atrg$ are cographic when $g = 2,3$, but this is not true when $g \geq 4$: the Schottky locus is proper.
\end{remark}

\begin{remark}
  Although our constructions look purely matroidal, they come from level structures on $\Atr{2}$ and $\Atr{3}$ with respect to the primes $p = 3$ and $p = 2$, respectively. More precisely, in the genus 2 case, consider the decomposition of $\widetilde{S}^2_{\geq 0}$ into secondary cones as in Theorem \ref{t:main}, and identify rays $v v^T$ and $ww^T$ if $v \equiv \pm w \pmod{3}$. Then we obtain $\mathbb{FP}^3$. The analogous statement holds, replacing the prime 3 with 2, in genus 3.
\end{remark}

\bigskip
\end{document}